\newtheorem*{thm*}{Theorem}
\newtheorem{thm}{Theorem}[section]
\newtheorem{lem}[thm]{Lemma}
\newtheorem{prop}[thm]{Proposition}
\newtheorem{cor}[thm]{Corollary}
\theoremstyle{definition}
\newtheorem{df}[thm]{Definition}
\newtheorem{exa}[thm]{Example}
\newtheorem{rem}[thm]{Remark}
\newtheorem*{ques*}{Question}
\numberwithin{equation}{section}
\newcommand{\Ext}{\mbox{\rm Ext}}
\newcommand{\Hom}{\mbox{\rm Hom}}
\newcommand{\Coker}{\mbox{\rm Coker}}
\newcommand{\Ker}{\mbox{\rm Ker}}
\author{Gang Yang}\address{Department of Mathematics, Lanzhou Jiaotong University, Lanzhou {\rm 730070}, P. R. China} \email{yanggang10@gmail.com}
\author{Rui-Juan Du}\address{Department of Computer Science, Gansu Political Science and Law Institute, Lanzhou {\rm 730070},
P. R. China} \email{duruijuan1126@126.com}
\thanks{\textbf{2010 Mathematics Subject Classification}: 16E05,  18G35.}
\thanks{\textbf{Key words}: chain complexes; cotorsion pairs.}
\thanks{This work was partly supported
by NSF of China (Grant No. 11101197, 11201376, 11301240) and the
Program of Science and Technique of Gansu Province (No.
145RJZA079).}
\begin{document}

\title[On Cotorsion pairs of chain complexes]{\large On Cotorsion pairs of chain complexes}

\begin{abstract}
In the paper we first construct a new cotorsion pair, in the
category of chain complexes, from two given cotorsion pairs in the
category of modules, and then we consider completeness of  such
pairs under certain conditions. \\
\end{abstract}
\maketitle

\section {\large\bf Introduction}
Cotorsion pairs (or cotorsion theories) were invented by Salce in
his study of abelian groups in \cite{Sa}. However, the concept
readily generalized to any abelian category, and its importance in
homological algebra has been shown by its use in the proof of the
flat cover conjecture \cite{BBE}. The flat cover conjecture was
positively settled by showing that the famous cotorsion pair
$(\mathcal{F}, \mathcal{C})$ is complete, where $\mathcal{F}$
denotes the class of flat modules and $\mathcal{C}$ denotes the
class of cotorsion modules. On the other hand, there is a lot of
interest in the complete cotorsion pairs in the category of chain
complexes. It not only is used to show  the existence of certain
covers and envelopes in the category of chain complexes \cite{AERO},
but also is closely related to Quillen model  structures and also to
the existence of certain adjoints. In fact, a famous result of Hovey
\cite{Hov} says that a Quillen model  structure on any abelian
category $\mathcal{C}$ is equivalent to two complete cotorsion pairs
in $\mathcal{C}$ which are compatible in a precise way. One of the
upshots of this result  was that the study of cotorsion pairs in the
category of chain complexes attracted more attentions. Besides this,
a recent result of a group of authors \cite[Theorem 3.5]{BEIJR}
shows that there is a tight connection between the complete
cotorsion pairs in the category of chain complexes of modules and
the existence of adjoint functors on the corresponding homotopy
categories. Hence, there has been several attempts to get (complete)
cotorsion pairs in Ch($R$), the category of chain complexes over a
ring $R$, from ones in $R$-Mod, see e.g. \cite{AERO}, \cite{AH},
\cite{BEIJR}, \cite{EER}, \cite{GR}, \cite{G04}, \cite{G08},
\cite{YL}.

Our main goal in this paper is first to construct a new cotorsion
pair in Ch($R$) from  two given cotorsion pairs in $R$-Mod, and then
to consider completeness of our constructed cotorsion pairs. More
specifically, given two classes of $R$-modules $\mathcal{U}$ and
$\mathcal{X}$, where $\mathcal{U}\subseteq\mathcal{X}$, we have the
following classes of chain complexes in Ch($R$).
\begin{itemize}
  \item $dw\widetilde{\mathcal{U}}$ is the class of all chain complexes  $U$
with each degree $U_n\in \mathcal{U}$.
  \item $ex\widetilde{\mathcal{U}}$ is the class of all exact  chain complexes  $U$
with each degree $U_n\in \mathcal{U}$.
  \item $\widetilde{\mathcal{U}}$ is the class of all exact  chain complexes  $U$ with
each cycle $Z_nU\in \mathcal{U}$.
  \item $\widetilde{\mathcal{U}}_\mathcal{X}$ is the class of all exact  chain complexes  $U$ with
 each degree $U_n\in \mathcal{U}$ and each cycle $Z_nU\in \mathcal{X}$.
\end{itemize}
Then theorems \ref{thm3.3} and \ref{thm3.4} say that if
$(\mathcal{U}, \mathcal{V})$ and $(\mathcal{X}, \mathcal{Y})$ are
two cotorsion pairs  with $\mathcal{U}\subseteq\mathcal{X}$ in
$R$-Mod, then $(\widetilde{\mathcal{U}}_\mathcal{X},
(\widetilde{\mathcal{U}}_\mathcal{X})^\bot)$ and
$({^\bot(\widetilde{\mathcal{Y}}_\mathcal{V})},
\widetilde{\mathcal{Y}}_\mathcal{V})$ are cotorsion pairs in
Ch($R$). This result immediately yields a list of cotorsion pairs in
Ch($R$) below, and so our argument gives a unified proof for most of
the existing cotorsion pairs in Ch($R$).
$$(\widetilde{\mathcal{U}},\widetilde{\mathcal{U}}^\bot)
\hspace{1cm}({^\bot\widetilde{\mathcal{V}}},\widetilde{\mathcal{V}})
\hspace{1cm}(ex\widetilde{\mathcal{U}},(ex\widetilde{\mathcal{U}})^\bot)
\hspace{1cm}({^\bot(ex\widetilde{\mathcal{V}}}),
ex\widetilde{\mathcal{V}})$$ Theorems \ref{rhm3.9} and \ref{rhm3.11}
say the following: Assume that $(\mathcal{U}, \mathcal{V})$ is a
hereditary cotorsion pair in $R$-Mod. Then  the cotorsion pair
$(dw\widetilde{\mathcal{U}}, (dw\widetilde{\mathcal{U}})^\bot)$ is
complete if and only if the cotorsion pair
$(ex\widetilde{\mathcal{U}}, (ex\widetilde{\mathcal{U}})^\bot)$ is
complete; and  the cotorsion pair
$({^\bot(dw\widetilde{\mathcal{V}})}, dw\widetilde{\mathcal{V}})$ is
complete if and only if the cotorsion pair
$({^\bot(ex\widetilde{\mathcal{V}})}, ex\widetilde{\mathcal{V}})$ is
complete. In the end of this paper, we consider cogenerated sets of
such pairs under certain conditions.

\section {\large\bf Preliminaries}
Throughout this paper, let $R$ be an associative ring with 1,
$R$-Mod  the category of left $R$-modules and $\text{Ch}(R)$ the
category of chain complexes of left $R$-modules. We denote a chain
complex $\cdots \rightarrow C_{n+1}\xrightarrow{\delta^C_{n+1}}
C_{n}\xrightarrow{\delta^C_n}C_{n-1}\rightarrow\cdots$ by
$(C,\delta)$ or simply $C$. The $n$th cycle of a chain complex $C$
is defined as $\Ker(\delta^C_n)$ and is denoted by $Z_nC$, the $n$th
boundary is $\text{Im}(\delta^C_{n+1})$ and is denoted by $B_nC$,
the $n$th homology is the module $H_nC=Z_nC/B_nC.$ A complex $C$ is
said to be exact if $H_nC=0$ for all $n\in\mathbb{Z}$.

We let $S^n(M)$ denote the chain complex with all entries 0 except
$M$  in degree $n$, and let $D^n(M)$ denote the chain complex $C$
with $C_n=C_{n-1}=M$ and all other entries 0, all differentials 0
except $\delta_n=1_M$. The suspension of a chain complex $C$,
denoted $\Sigma C$, is the chain complex given by $(\Sigma
C)_n=C_{n-1}$ and $\delta^{\Sigma C}_n=-\delta^C_{n-1}$. The chain
complex $\Sigma(\Sigma C)$ is denoted $\Sigma^2C$ and inductively we
define $\Sigma^nC$ for all $\in\mathbb{Z}$.

Given two chain complexes $X$ and $Y$ we define $\text{Hom}(X, Y)$
to be the complex of $\mathbb{Z}$-modules with $n$th degree
$\text{Hom}(X, Y)_n=\Pi_{i\in\mathbb{Z}}\text{Hom}(X_i, Y_{i+n})$
and differential $\delta_n$ satisfying
$(\delta_n(f))_i=\delta^Y_{i+n}f_i-(-1)^nf_{i-1}\delta^X_i$. This
gives a functor $\text{Hom}(X, -):
\text{Ch}(R)\rightarrow\text{Ch}(\mathbb{Z})$ which is left exact,
and exact if $X_n$ is projective for all $n$. Similarly, the
contravariant  functor $\text{Hom}(-, Y)$ sends right exact equences
to left exact sequences and is exact if $Y_n$ is injective for all
$n$. Note that the category Ch($R$) is a Grothendieck category with
a projective generator, and so it has enough projectives. Recall
that a Grothendieck category is an abelian category with a generator
and  with the property that the  direct limits are exact.

Recall that $\Ext^1_{\text{Ch}(R)}(X, Y)$ is the group of
(equivalence classes) of short exact sequences $0\rightarrow
Y\rightarrow Z\rightarrow X\rightarrow0$ under the Baer sum. We let
$\Ext^1_{dw}(X, Y)$ be the subgroup of $\Ext^1_{\text{Ch}(R)}(X, Y)$
consisting of those short exact sequences which are split in each
degree. We often make use of the following standard fact.

\begin{lem}\label{lem2.1} For two chain complexes $X$ and $Y$, we have
$$\Ext^1_{dw}(X, \Sigma^{-n-1}Y)\cong H_n\Hom(X, Y)=\text{Ch}(R)(X,
\Sigma^{-n}Y)/\sim,$$ where $\sim$ is chain homotopy.
\end{lem}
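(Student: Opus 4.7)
The plan is to establish the two equalities separately, each by unwinding definitions and carefully tracking signs.

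For the second equality, a degree-$n$ element $f$ of $\Hom(X,Y)$ is a family $(f_i)$ with $f_i\colon X_i\to Y_{i+n}$. Using the given formula for $\delta_n$ together with $\delta^{\Sigma^{-n}Y}_i=(-1)^n\delta^Y_{i+n}$ (from iterating the suspension), the cycle condition $\delta_n(f)=0$ rewrites as $\delta^{\Sigma^{-n}Y}_i f_i=f_{i-1}\delta^X_i$, which is precisely the condition that $f$ be a chain map $X\to\Sigma^{-n}Y$. The same computation shows that the image of $\delta_{n+1}$ corresponds (up to a global sign $(-1)^n$, which is an isomorphism) to null-homotopic maps $X\to\Sigma^{-n}Y$. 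This yields the identification $H_n\Hom(X,Y)\cong\text{Ch}(R)(X,\Sigma^{-n}Y)/\sim$.

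For the first isomorphism, I would construct an explicit bijection via classifying cocycles of the extension. Given a cycle $\phi\in Z_n\Hom(X,Y)$, build a complex $Z_\phi$ by setting $(Z_\phi)_i=(\Sigma^{-n-1}Y)_i\oplus X_i=Y_{i+n+1}\oplus X_i$ with upper-triangular differential whose diagonal entries are $\delta^{\Sigma^{-n-1}Y}_i$ and $\delta^X_i$, and whose off-diagonal entry is $\phi_i\colon X_i\to Y_{i+n}=(\Sigma^{-n-1}Y)_{i-1}$. The cocycle condition on $\phi$ is exactly what forces $(\delta^{Z_\phi})^2=0$, and the evident inclusion/projection give a short exact sequence $0\to\Sigma^{-n-1}Y\to Z_\phi\to X\to0$ that is split in each degree, hence represents a class in $\Ext^1_{dw}(X,\Sigma^{-n-1}Y)$.

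It remains to show that $\phi\mapsto[Z_\phi]$ descends to a bijection on $H_n\Hom(X,Y)$. Surjectivity: given any degreewise-split extension, pick splittings to identify its $i$th term with $(\Sigma^{-n-1}Y)_i\oplus X_i$, and read off $\phi$ as the off-diagonal block of the differential; the cocycle condition follows automatically from $\delta^2=0$. For well-definedness and injectivity on classes, if $\phi'-\phi=\delta_{n+1}(s)$ is a boundary, then the family $\begin{pmatrix}1 & \pm s_i\\ 0 & 1\end{pmatrix}\colon(Z_\phi)_i\to(Z_{\phi'})_i$ (with appropriate sign) is a chain isomorphism realising an equivalence of extensions; conversely, any such equivalence has this upper-triangular form, and its $(1,2)$-block exhibits $\phi'-\phi$ as a $\delta_{n+1}$-boundary.

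The principal obstacle is not conceptual but bookkeeping — aligning sign conventions between the Hom-complex differential, the iterated suspension signs $(-1)^{n+1}$, and the chain-homotopy formula. Since the final statement is measured in $H_n$, stray factors of $\pm1$ wash out in the end, but writing the bijection down cleanly demands consistent sign choices throughout.
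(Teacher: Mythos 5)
The paper offers no proof of this lemma at all---it is cited as a ``standard fact''---so there is nothing to compare against; your argument is the standard one and is correct. The sign bookkeeping checks out: with $\delta^{\Sigma^{-n}Y}_i=(-1)^n\delta^Y_{i+n}$ the cycle condition in $\Hom(X,Y)_n$ is exactly the chain-map condition, null-homotopies match boundaries up to the global unit $(-1)^n$, and the cocycle $\phi\mapsto Z_\phi$ construction with the equivalence $\begin{pmatrix}1 & (-1)^n s_i\\ 0 & 1\end{pmatrix}$ realises homologous cocycles as equivalent degreewise-split extensions. The only point left implicit is that the bijection respects the Baer sum (so that it is an isomorphism of abelian groups rather than merely a bijection), but this is immediate from the additivity of the off-diagonal block under the Baer-sum construction.
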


In particular, for two chain complexes $X$ and $Y$, $\Hom(X, Y)$ is
exact if and only if for any $n\in\mathbb{Z}$, any
$f:\Sigma^{n}X\rightarrow Y$ is homotopic to 0 (or if and only if
any $f:X\rightarrow \Sigma^{-n}Y$ is homotopic to 0).

\begin{df} A pair $(\mathcal{A},
\mathcal{B})$ in an abelian category $\mathcal{C}$ is called a
cotorsion pair if the following conditions hold:
\begin{enumerate}
                \item $\Ext^1_\mathcal{C}(A, B)=0$ for all $A\in\mathcal{A}$ and $B\in\mathcal{B}$;
                \item If $\Ext^1_\mathcal{C}(A, X)=0$ for all $A\in\mathcal{A}$ then $X\in\mathcal{B}$;
                \item If $\Ext^1_\mathcal{C}(X, B)=0$ for all $B\in\mathcal{B}$ then $X\in\mathcal{A}$.
\end{enumerate}
\end{df}

We think of a cotorsion pair $(\mathcal{A}, \mathcal{B})$ as being
$\lq\lq$orthogonal with respect to $\Ext^1_\mathcal{C}$". This is
often expressed with the notation $\mathcal{A}={^\perp\mathcal{B}}$
and $\mathcal{B}=\mathcal{A}^\perp$. The notion of a cotorsion pair
was first introduced by Salce in \cite{Sa} and rediscovered by
Enochs and coauthors in 1990's. For a good reference on cotorsion
pairs one can refer to \cite{EJ}.

\begin{df} A cotorsion pair $(\mathcal{A}, \mathcal{B})$ in an abelian category $\mathcal{C}$ is
said to have enough projectives if for any object $X\in\mathcal{C}$
there is a short exact sequence $0\rightarrow B\rightarrow
A\rightarrow X\rightarrow 0$ with $A\in \mathcal{A}$ and $B\in
\mathcal{B}$. We say it has enough injectives if it satisfies the
dual statement. If both of these hold we say the cotorsion pair is
complete.
\end{df}

Note that if the category $\mathcal{C}$ has enough injectives and
projectives then a cotorsion pair $(\mathcal{A}, \mathcal{B})$ is
complete if and only if $(\mathcal{A}, \mathcal{B})$ has enough
injectives if and only if $(\mathcal{A}, \mathcal{B})$ has enough
projectives \cite{EJ}.

\begin{df} A cotorsion pair $(\mathcal{A}, \mathcal{B})$ in an abelian category $\mathcal{C}$ is
said to be hereditary, if $\Ext^i_\mathcal{C}(A, B)=0$ for any
object $A\in\mathcal{A}$ and $B\in\mathcal{B}$ and $i\geq1$.
\end{df}

In $R$-Mod, the class of projectives is the left half of an obvious
hereditary complete cotorsion pair while the class of injectives is
the right half of an obvious hereditary complete cotorsion pair.
There are many  nontrivial examples of hereditay complete cotorsion
pairs, which can be found in \cite{GT}. We also need the next two
definitions (see \cite{St}).

\begin{df}\label{df} Let $\mathcal{S}$ be a class of objects of
a Grothendieck category $\mathcal{G}$. An object $X\in\mathcal{G}$
is called $\mathcal{S}$-filtered if there exists a well-ordered
direct system $(X_{\alpha}, i_{\alpha\beta}|\alpha<\beta\leq\sigma)$
indexed by an ordinal number $\sigma$ such that
\begin{enumerate}
  \item $X_0=0$ and $X_{\sigma}=X$,
  \item for each limit ordinal $\mu\leq\sigma$, the direct limit of the subsystem
  $(X_{\alpha}, i_{\alpha\beta}|\alpha<\beta\leq\mu)$  is precisely $X_{\mu}$,
  the direct limit morphisms being $i_{\alpha\mu}: X_{\alpha}\rightarrow X_{\mu}$,
  \item $i_{\alpha\beta}: X_{\alpha}\rightarrow X_{\beta}$ is a monomorphism in $\mathcal{G}$ for each $\alpha<\beta\leq\sigma$,
  \item $\Coker(i_{\alpha,\alpha+1})\in\mathcal{S}$ for each
  $\alpha<\sigma$.
\end{enumerate}
The direct system $(X_{\alpha}, i_{\alpha\beta})$ is then called an
$\mathcal{S}$-filtration of $X$. The class of all
$\mathcal{S}$-filtered objects in $\mathcal{G}$ is denoted by
Filt-$\mathcal{S}$.
\end{df}

\begin{df} A class $\mathcal{F}$ of objects in $\mathcal{G}$ is called deconstructible if there is a set
$\mathcal{S}$ such that $\mathcal{F}=\text{Filt-}\mathcal{S}$.
\end{df}

If $P$ is a projective $R$-module and $x\in P$, then Kaplansky
\cite{K} showed that there exists a countably generated summand of
$P$ which contains $x$. Enochs and L$\acute{o}$pez-Ramos generalized
this ideal and introduced the notion of a Kaplansky class, see
\cite[Definition 2.1]{EL}.

\begin{df} A class $\mathcal{K}$ of $R$-modules is called a
$\kappa$-Kaplansky class if there exists a cardinal number $\kappa$
such that for every $M\in\mathcal{K}$ and for any subset $S\subseteq
M$ with $\text{Card}(S)\leq\kappa$, there exists a submodule $N$ of
$M$ that contains $S$ with the property that
$\text{Card}(N)\leq\kappa$ and both $N$ and $M/N$ are in
$\mathcal{K}$. We say that $\mathcal{K}$ is a Kaplansky class if it
is a $\kappa$-Kaplansky class for some regular cardinal $\kappa$.
\end{df}

Let $C$ be a chain complex in Ch($R$). By the cardinality of $C$,
$\text{Card}(C)$, we mean
$\text{Card}(\coprod_{n\in\mathbb{Z}}C_n)$. By a subset $S$ of $X$
we mean a family $(S_n)_{n\in\mathbb{Z}}$ such that $S_n$ is a
subset of $C_n$, for $n\in\mathbb{Z}$. Similarly, we have the notion
of a Kaplansky class of chain complexes. We assume in the paper that
all cardinals are regular, that is, are infinite cardinals which are
not the sum of a smaller number of smaller cardinals.  We let
$\omega$ denote the first limit ordinal.

\section*{Acknowledgements}
The authors  thank the referee for his/her careful reading and many
considerable suggestions, which have improved the present paper.


\section{\bf Cotortion pairs in the category of chain complexes}\label{ns}

\begin{lem}\label{lem3.1} Let $X$ be a chain complex and $J$ be an
injective cogenerator for $R$-Mod. If every chain map $\alpha:
X\rightarrow S^n(J)$ lifts over $D^n(J)$ for any $n\in\mathbb{Z}$, then $X$ is exact.
\end{lem}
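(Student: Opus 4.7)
The plan is to unwind the hypothesis into a concrete module-level statement about $X_n$, $Z_nX$, and $B_nX$, and then derive a contradiction if some homology $H_nX$ were nonzero using the cogenerator and injectivity properties of $J$.

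First I would describe the Hom spaces explicitly. A chain map $\alpha: X \to S^n(J)$ is just a module homomorphism $\alpha_n: X_n \to J$ with $\alpha_n \delta^X_{n+1}=0$ (i.e.\ vanishing on $B_nX$). A chain map $\tilde\alpha: X \to D^n(J)$ is determined by $\tilde\alpha_{n-1}: X_{n-1} \to J$, since commutativity forces $\tilde\alpha_n = \tilde\alpha_{n-1}\delta^X_n$ and all other components to be zero. The natural projection $\pi: D^n(J) \twoheadrightarrow S^n(J)$ from the short exact sequence
\[
0 \to S^{n-1}(J) \to D^n(J) \xrightarrow{\pi} S^n(J) \to 0
\]
is the identity in degree $n$, so $\pi\tilde\alpha$ has degree-$n$ part $\tilde\alpha_{n-1}\delta^X_n$. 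Therefore ``$\alpha$ lifts over $D^n(J)$'' means precisely that there exists $\tilde\alpha_{n-1}:X_{n-1}\to J$ with $\alpha_n=\tilde\alpha_{n-1}\delta^X_n$; in particular, $\alpha_n$ must vanish on $Z_nX=\Ker(\delta^X_n)$.

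Next I would show that the hypothesis forces $H_nX=0$ for every $n$. Suppose by contradiction that $H_nX=Z_nX/B_nX\neq 0$ for some $n$. Because $J$ is a cogenerator there is a nonzero homomorphism $\phi: H_nX \to J$. Composing $\phi$ with the surjection $Z_nX\to H_nX$ produces a nonzero map $Z_nX\to J$ which vanishes on $B_nX$, and this extends first over the inclusion $Z_nX/B_nX\hookrightarrow X_n/B_nX$ and then pulls back along $X_n\twoheadrightarrow X_n/B_nX$ to a map $\alpha_n: X_n \to J$, using the injectivity of $J$. By construction $\alpha_n|_{B_nX}=0$, so $\alpha_n$ defines a chain map $\alpha: X \to S^n(J)$; on the other hand $\alpha_n|_{Z_nX}$ is exactly the nonzero map induced by $\phi$. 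By the previous paragraph this contradicts the liftability of $\alpha$ over $D^n(J)$.

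Hence $H_nX=0$ for all $n\in\mathbb Z$, i.e.\ $X$ is exact. The only step that requires any care is the correct translation of the lifting condition, which hinges on identifying the projection $D^n(J)\to S^n(J)$ and the parameterization of chain maps into $D^n(J)$; once this is set up, the cogenerator/injectivity argument is routine and essentially forced.
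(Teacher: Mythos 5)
Your proof is correct, and its core is the same as the paper's: both arguments reduce the lifting hypothesis to the statement that a chain map $\alpha\colon X\to S^n(J)$ lifts over $D^n(J)$ precisely when $\alpha_n$ factors as $\beta_{n-1}\delta^X_n$ for some $\beta_{n-1}\colon X_{n-1}\to J$, and both then apply this to a map built from $X_n/B_nX$. The difference is in how the cogenerator is used. The paper argues directly: it takes a \emph{monomorphism} $t\colon X_n/B_nX\to J$, deduces $t=\beta_{n-1}\rho_n$ from the lift, concludes that the induced map $\rho_n\colon X_n/B_nX\to X_{n-1}$ is monic, and hence that $Z_nX=B_nX$. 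You argue by contradiction: if $H_nX\neq 0$, the cogenerator property yields a nonzero map $H_nX\to J$, which you extend by injectivity of $J$ to a chain map $X\to S^n(J)$ that is nonzero on $Z_nX$, contradicting the fact that any liftable $\alpha_n$ must kill $Z_nX=\Ker(\delta^X_n)$. Your variant is actually slightly more careful on one point: an injective cogenerator $J$ only guarantees that every module embeds into a \emph{product} of copies of $J$, not into $J$ itself (e.g.\ $\mathbb{Z}$ does not embed into $\mathbb{Q}/\mathbb{Z}$), so the monomorphism $t\colon X_n/B_nX\to J$ posited in the paper's proof need not exist; your element-separation argument, which only needs a single nonzero map out of $H_nX$, sidesteps this and proves the lemma exactly as stated.
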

\begin{proof}Let $n$ be an arbitrary integer, we need only to show
exactness of $X$ in degree $n$. Suppose that $t: X_n/B_nX\rightarrow
J$ is a monomorphism. Then it is easy to check that $\alpha:
X\rightarrow S^n(J)$ is a chain map, where $\alpha_i=0$ for
$i\not=n$, and $\alpha_n$ is the composition of homomorphisms
$\pi_n: X_n\rightarrow X_n/B_nX$ and $t$. By hypothesis, there
exists a chain map $\beta: X\rightarrow D^n(J)$ such that the
following diagram is commutative:
$$\xymatrix{ & X\ar@{.>}[dl]|-{\beta}\ar[d]^{\alpha} &   \\
  D^n(J)\ar[r]^{p} & S^n(J) \ar[r]^{ } &0
  }$$
Put $\delta^X_n=\rho_n\pi_n$, where $\rho_n: X_n/B_nX\rightarrow
X_{n-1}$. Thus
$t\pi_n=\alpha_n=p_n\beta_n=\beta_n=\beta_{n-1}\delta^X_n=\beta_{n-1}\rho_n\pi_n$,
and so $t=\beta_{n-1}\rho_n$ since $\pi_n$ is epic. This implies
that $\rho_n$ is a monomorphism. Therefore
$Z_nX=\text{Ker}(\rho_n\pi_n)=\text{Ker}(\pi_n)=B_nX$. This proves
exactness of $X$.
\end{proof}

\begin{df}
Given two classes  of $R$-modules $\mathcal{U}$ and $\mathcal{X}$ in
$R$-Mod with $\mathcal{U}\subseteq\mathcal{X}$. We denote by
$\widetilde{\mathcal{U}}_\mathcal{X}$ the class of all exact chain
complexes $U$ with each degree $U_n\in \mathcal{U}$ and each cycle
$Z_nU\in\mathcal{X}$ in $\text{Ch}(R)$.
\end{df}

 Clearly if we let
$\mathcal{U}$ and $\mathcal{X}$ be certain classes  of modules, we
will get some familiar and interesting classes in $\text{Ch}(R)$.
For example, if  $\mathcal{U}=\mathcal{P}$ is the class of all
projective modules and $\mathcal{X}=\mathcal{G}$ is the class of all
Gorenstein projective modules in $R$-Mod, then
$\widetilde{\mathcal{P}}_\mathcal{G}$ is the class of all complete
projective resolutions of Gorenstein projective modules. (See
\cite{EJ} and \cite{Hol} for Gorenstein ptojective modules).

\begin{thm}\label{thm3.3} Let $(\mathcal{U}, \mathcal{V})$ and $(\mathcal{X},
\mathcal{Y})$ be two cotorsion pairs  with
$\mathcal{U}\subseteq\mathcal{X}$ in $R$-Mod. Then
$(\widetilde{\mathcal{U}}_\mathcal{X},
(\widetilde{\mathcal{U}}_\mathcal{X})^\bot)$ is a cotorsion pair in
$\text{Ch}(R)$ and $(\widetilde{\mathcal{U}}_\mathcal{X})^\bot$ is
the class of all chain complexes $V$ for which each
$V_n\in\mathcal{V}$ and for which each map $U\rightarrow V$ is null
homotopic whenever $U\in\widetilde{\mathcal{U}}_\mathcal{X}$.
\end{thm}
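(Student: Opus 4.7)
The proof splits into two parts: (A) obtain the asserted description of $(\widetilde{\mathcal{U}}_\mathcal{X})^\bot$, and (B) use it to verify the only nontrivial cotorsion-pair axiom, that ${}^\bot\bigl((\widetilde{\mathcal{U}}_\mathcal{X})^\bot\bigr)\subseteq\widetilde{\mathcal{U}}_\mathcal{X}$.

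For (A), I first observe that $\widetilde{\mathcal{U}}_\mathcal{X}$ is closed under the suspensions $\Sigma^{\pm 1}$ and that for each $U\in\mathcal{U}$ the disk $D^n(U)$ lies in $\widetilde{\mathcal{U}}_\mathcal{X}$ (its only nonzero cycle is $U\in\mathcal{U}\subseteq\mathcal{X}$). If $V\in(\widetilde{\mathcal{U}}_\mathcal{X})^\bot$, the standard adjunction $\Ext^1_{\text{Ch}(R)}(D^n(U),V)\cong \Ext^1_R(U,V_n)$ forces $V_n\in\mathcal{V}$; and the identity $\Ext^1_{\text{Ch}(R)}(U,\Sigma^{-1}V)\cong \Ext^1_{\text{Ch}(R)}(\Sigma U,V)=0$ together with Lemma \ref{lem2.1} (at $n=0$) gives that every chain map $U\to V$ is null homotopic. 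Conversely, given $V$ with these two properties, any short exact sequence $0\to V\to W\to U\to 0$ with $U\in\widetilde{\mathcal{U}}_\mathcal{X}$ is degree-wise split because $\Ext^1_R(U_n,V_n)=0$; the class it represents in $\Ext^1_{dw}(U,V)\cong H_{-1}\Hom(U,V)$ vanishes since, by the null-homotopy hypothesis applied to $\Sigma^{-1}U\in \widetilde{\mathcal{U}}_\mathcal{X}$, every chain map $U\to\Sigma V$ is null homotopic.

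For (B), I test $X\in{}^\bot\bigl((\widetilde{\mathcal{U}}_\mathcal{X})^\bot\bigr)$ against three families of complexes, each in $(\widetilde{\mathcal{U}}_\mathcal{X})^\bot$ by (A). \emph{Each $X_n\in\mathcal{U}$:} for $M\in\mathcal{V}$ the disk $D^{n+1}(M)$ has all terms in $\mathcal{V}$ and is contractible; so it lies in $(\widetilde{\mathcal{U}}_\mathcal{X})^\bot$, and $0=\Ext^1_{\text{Ch}(R)}(X,D^{n+1}(M))\cong \Ext^1_R(X_n,M)$ yields $X_n\in\mathcal{U}$. \emph{$X$ is exact:} let $J$ be an injective cogenerator of $R$-Mod. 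Each term of $S^n(J)$ is in $\mathcal{V}$, and the injectivity of $J$ forces every chain map from $\widetilde{\mathcal{U}}_\mathcal{X}$ into $S^n(J)$ to be null homotopic, so $S^n(J)\in(\widetilde{\mathcal{U}}_\mathcal{X})^\bot$. The short exact sequence $0\to S^{n-1}(J)\to D^n(J)\to S^n(J)\to 0$ together with $\Ext^1_{\text{Ch}(R)}(X,S^{n-1}(J))=0$ lets every chain map $X\to S^n(J)$ lift over $D^n(J)$; Lemma \ref{lem3.1} then delivers the exactness of $X$.

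\emph{Each cycle $Z_nX\in\mathcal{X}$:} Since $\mathcal{U}\subseteq\mathcal{X}$ gives $\mathcal{Y}\subseteq\mathcal{V}$, every term of $S^n(Y)$ is in $\mathcal{V}$ for $Y\in\mathcal{Y}$. A chain map $U\to S^n(Y)$ with $U\in\widetilde{\mathcal{U}}_\mathcal{X}$ descends to a map $Z_{n-1}U\to Y$, and being null homotopic amounts to extending it along $Z_{n-1}U\hookrightarrow U_{n-1}$; the short exact sequence $0\to Z_{n-1}U\to U_{n-1}\to Z_{n-2}U\to 0$ combined with $\Ext^1_R(Z_{n-2}U,Y)=0$ (from $(\mathcal{X},\mathcal{Y})$) provides such an extension, so $S^n(Y)\in(\widetilde{\mathcal{U}}_\mathcal{X})^\bot$. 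Since $X_k\in\mathcal{U}$ and $Y\in\mathcal{V}$, any short exact sequence $0\to S^n(Y)\to W\to X\to 0$ is degree-wise split, hence $\Ext^1_{\text{Ch}(R)}(X,S^n(Y))=\Ext^1_{dw}(X,S^n(Y))\cong H_{-1}\Hom(X,S^n(Y))$ by Lemma \ref{lem2.1}. A direct Hom-complex computation, using the exactness of $X$ and the vanishing of $\Ext^1_R(X_n,Y)$, identifies this last group with $\Ext^1_R(Z_{n-1}X,Y)$; varying $n$ yields $Z_kX\in\mathcal{X}$ for every $k$. The main technical hurdle is this final identification; everything else reduces to standard manipulations of disk and sphere complexes and the shift adjointness underlying Lemma \ref{lem2.1}.
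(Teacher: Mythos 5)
Your proof is correct and follows essentially the same route as the paper's: the same test objects ($D^{n+1}(M)$ for $M\in\mathcal{V}$, $S^n(J)$ for $J$ an injective cogenerator, $S^n(Y)$ for $Y\in\mathcal{Y}$), the same reduction to degreewise-split extensions and the homotopy description of $\Ext^1_{dw}$ via Lemma \ref{lem2.1}, and the same appeal to Lemma \ref{lem3.1} for exactness. The only difference is that you verify by hand the identifications $\Ext^1_{\text{Ch}(R)}(X,D^{n+1}(M))\cong\Ext^1_R(X_n,M)$ and $\Ext^1_{\text{Ch}(R)}(X,S^n(Y))\cong\Ext^1_R(Z_{n-1}X,Y)$, which the paper imports from the cited lemmas of Gillespie.
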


\begin{proof} Let $\widehat{\mathcal{W}}$ denote the  class of all
chain complexes $V$ for which each $V_n\in\mathcal{V}$ and for which
each map $U\rightarrow V$ is null homotopic whenever
$U\in\widetilde{\mathcal{U}}_\mathcal{X}$. It is clear that
$\widehat{\mathcal{W}}$ is closed under taking suspensions. Given
any chain complex $U\in\widetilde{\mathcal{U}}_\mathcal{X}$, and any
$R$-module $V\in\mathcal{V}$, then by \cite[Lemma 3.1]{G04} we have
$\Ext^1_{\text{Ch}(R)}(U, D^{n+1}(V))\cong\Ext^1_R(U_n, V)=0$, which
implies that all disks $D^n(V)$ are contained in
$\widehat{\mathcal{W}}$ whenever $V\in\mathcal{V}$. Similarly, for
any $U\in\widetilde{\mathcal{U}}_\mathcal{X}$, since $U_n/B_nU\cong
Z_{n-1}U\in\mathcal{X}$, we get by \cite[Lemma 4.2]{G08} that
$\Ext^1_{\text{Ch}(R)}(U, S^n(Y))\cong\Ext^1_R(U_n/B_nU, Y)=0$ for
any $R$-module $Y\in\mathcal{Y}$, and so each sphere
$S^n(Y)\in\widehat{\mathcal{W}}$ whenever $Y\in\mathcal{Y}$.

In the following we will show that
$(\widetilde{\mathcal{U}}_\mathcal{X}, \widehat{\mathcal{W}})$ is a
cotorsion pair.

First suppose that $U\in\widetilde{\mathcal{U}}_\mathcal{X}$, and
$W\in\widehat{\mathcal{W}}$. Then any element $0\rightarrow
W\rightarrow T\rightarrow U\rightarrow0$ of
$\Ext^1_{\text{Ch}(R)}(U, W)$ is degreewise split and so is an
element of $\Ext^1_{dw}(U, W)$. But it follows easily from Lemma
\ref{lem2.1} that $\Ext^1_{dw}(U, W)=0$. Thus
$\Ext^1_{\text{Ch}(R)}(U, W)=0$.

Next assume that $\Ext^1_{\text{Ch}(R)}(U, C)=0$ for all
$U\in\widetilde{\mathcal{U}}_\mathcal{X}$, we will show
$C\in\widehat{\mathcal{W}}$. By \cite[Lemma 3.1]{G04}, we have
$\Ext^1_R(A, C_n)\cong\Ext^1_{\text{Ch}(R)}(D^n(A), C)=0$ since
$D^n(A)$ is clearly in $\widetilde{\mathcal{U}}_\mathcal{X}$
whenever $A$ is an $R$-module in $\mathcal{U}$. Thus
$C_n\in\mathcal{V}$. Now let $U\rightarrow C$ be a chain map, where
$U\in\widetilde{\mathcal{U}}_\mathcal{X}$. We would like to show
that it is null homotopic. Clearly, we have $\Ext^1_{dw}(U,
\Sigma^{-1}C)=\Ext^1_{dw}(\Sigma U, C)$ and the last group equals 0
since $\Sigma U\in\widetilde{\mathcal{U}}_\mathcal{X}$. Thus
$\Ext^1_{dw}(U, \Sigma^{-1}C)=0$, and so $C\in\widehat{\mathcal{W}}$
by Lemma \ref{lem2.1}.

Last we assume that $\Ext^1_{\text{Ch}(R)}(C, W)=0$ for all
$W\in\widehat{\mathcal{W}}$. We will show
$C\in\widetilde{\mathcal{U}}_\mathcal{X}$. Since for any $R$-module
$V\in\mathcal{V}$, the disk $D^{n+1}(V)\in\widehat{\mathcal{W}}$, we
have $\Ext^1_R(C_n, V)\cong\Ext^1_{\text{Ch}(R)}(C, D^{n+1}(V))=0$,
and so $C_n\in\mathcal{U}$. Also since
$S^n(Y)\in\widehat{\mathcal{W}}$ for any $R$-module
$Y\in\mathcal{Y}$, we have $\Ext^1_{\text{Ch}(R)}(C, S^n(Y))=0$, and
so $\Ext^1_R(C_n/B_nC, Y)=0$ by \cite[Lemma 4.2]{G08}, which implies
that each $C_n/B_nC$ belongs to $\mathcal{X}$. But by using Lemma
\ref{lem3.1} we get that each $Z_nC\cong
C_{n+1}/B_{n+1}C\in\mathcal{X}$. Thus
$C\in\widetilde{\mathcal{U}}_\mathcal{X}$, as desired.
\end{proof}

Given two cotorsion pairs  $(\mathcal{U}, \mathcal{V})$ and
$(\mathcal{X}, \mathcal{Y})$   in $R$-Mod. Clearly,
$\mathcal{U}\subseteq\mathcal{X}$ if and only if
$\mathcal{Y}\subseteq\mathcal{V}$, with this in mind, we also have
the following result.

\begin{thm}\label{thm3.4} Let $(\mathcal{U}, \mathcal{V})$ and $(\mathcal{X},
\mathcal{Y})$ be two cotorsion pairs  with
$\mathcal{U}\subseteq\mathcal{X}$ in $R$-Mod. Then
$(^\bot(\widetilde{\mathcal{Y}}_\mathcal{V}),
\widetilde{\mathcal{Y}}_\mathcal{V})$ is a cotorsion pair in Ch($R$)
and $^\bot(\widetilde{\mathcal{Y}}_\mathcal{V})$ is the class of all
chain complexes $X$ for which each $X_n\in\mathcal{X}$ and for which
each map $X\rightarrow Y$ is null homotopic whenever
$Y\in\widetilde{\mathcal{Y}}_\mathcal{V}$.
\end{thm}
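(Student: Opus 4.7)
The plan is to mirror the proof of Theorem \ref{thm3.3}, with the two cotorsion pairs playing interchanged roles; this is legitimate because $\mathcal{U}\subseteq\mathcal{X}$ is equivalent to $\mathcal{Y}\subseteq\mathcal{V}$. Let $\widehat{\mathcal{Z}}$ denote the candidate class described in the statement; the class $\widetilde{\mathcal{Y}}_\mathcal{V}$ is clearly closed under suspensions. I would first stock $\widehat{\mathcal{Z}}$ with enough test complexes: for any $X\in\mathcal{X}$ the disk $D^{n}(X)$ is contractible with entries in $\mathcal{X}$, so $D^{n}(X)\in\widehat{\mathcal{Z}}$. For any $U\in\mathcal{U}$, the sphere $S^{n}(U)$ has entries in $\mathcal{U}\subseteq\mathcal{X}$, and any chain map $S^{n}(U)\to Y$ with $Y\in\widetilde{\mathcal{Y}}_\mathcal{V}$ corresponds to a morphism $f_{n}\colon U\to Z_{n}Y\in\mathcal{V}$. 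Producing a null homotopy amounts to lifting $f_n$ along the surjection $\delta^{Y}_{n+1}\colon Y_{n+1}\twoheadrightarrow Z_{n}Y$, whose kernel $Z_{n+1}Y$ lies in $\mathcal{V}$, and this lift exists because $\Ext^{1}_{R}(U,Z_{n+1}Y)=0$. Thus $S^{n}(U)\in\widehat{\mathcal{Z}}$.

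Next I would verify Ext-vanishing and the left-orthogonality characterization. For $X\in\widehat{\mathcal{Z}}$ and $Y\in\widetilde{\mathcal{Y}}_\mathcal{V}$, any short exact sequence $0\to Y\to T\to X\to 0$ splits in each degree because $X_n\in\mathcal{X}$ and $Y_n\in\mathcal{Y}$, so it lies in $\Ext^{1}_{dw}(X,Y)$; applying the null-homotopy clause of $\widehat{\mathcal{Z}}$ to every suspension of $Y$ (which remains in $\widetilde{\mathcal{Y}}_\mathcal{V}$) together with Lemma \ref{lem2.1} gives $\Ext^{1}_{dw}(X,Y)=0=\Ext^{1}_{\text{Ch}(R)}(X,Y)$. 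Conversely, assume $\Ext^{1}_{\text{Ch}(R)}(X,Y)=0$ for all $Y\in\widetilde{\mathcal{Y}}_\mathcal{V}$. Testing against $D^{n+1}(B)$ with $B\in\mathcal{Y}\subseteq\mathcal{V}$ (so $D^{n+1}(B)\in\widetilde{\mathcal{Y}}_\mathcal{V}$), the isomorphism $\Ext^{1}_{\text{Ch}(R)}(X,D^{n+1}(B))\cong\Ext^{1}_{R}(X_{n},B)$ from \cite[Lemma 3.1]{G04} yields $X_{n}\in{^\bot}\mathcal{Y}=\mathcal{X}$, and the null-homotopy clause follows from Lemma \ref{lem2.1} applied to the suspensions of $Y$.

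The main obstacle is the right-orthogonality characterization. Assume $\Ext^{1}_{\text{Ch}(R)}(X,Y)=0$ for every $X\in\widehat{\mathcal{Z}}$; I would prove in turn that $Y_{n}\in\mathcal{Y}$, that $Y$ is exact, and that $Z_{n}Y\in\mathcal{V}$. The first is immediate from $D^{n+1}(X)\in\widehat{\mathcal{Z}}$ for $X\in\mathcal{X}$ via the standard isomorphism $\Ext^{1}_{\text{Ch}(R)}(D^{n+1}(X),Y)\cong\Ext^{1}_{R}(X,Y_{n+1})$. For exactness, I would feed the short exact sequence $0\to S^{n}(R)\to D^{n+1}(R)\to S^{n+1}(R)\to 0$ into the hypothesis: since $R\in\mathcal{U}$ implies $S^{n+1}(R)\in\widehat{\mathcal{Z}}$, the group $\Ext^{1}_{\text{Ch}(R)}(S^{n+1}(R),Y)$ vanishes, which forces every chain map $S^{n}(R)\to Y$ to extend to $D^{n+1}(R)\to Y$; unpacking this statement shows that $\delta^{Y}_{n+1}\colon Y_{n+1}\to Z_{n}Y$ is surjective, so $Y$ is exact at $n$. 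Finally, replacing $R$ by an arbitrary $U\in\mathcal{U}$ in the same short exact sequence and combining the resulting long exact $\Hom$/$\Ext$ sequence with the short exact sequence $0\to Z_{n+1}Y\to Y_{n+1}\to Z_{n}Y\to 0$ (valid by the exactness just established) together with $\Ext^{1}_{R}(U,Y_{n+1})=0$ yields $\Ext^{1}_{\text{Ch}(R)}(S^{n+1}(U),Y)\cong\Ext^{1}_{R}(U,Z_{n+1}Y)$ (analogous to \cite[Lemma 4.2]{G08}). The hypothesis forces this group to vanish, so $Z_{n+1}Y\in\mathcal{U}^{\bot}=\mathcal{V}$, completing the proof that $Y\in\widetilde{\mathcal{Y}}_\mathcal{V}$.
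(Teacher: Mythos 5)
Your proposal is correct and is exactly the argument the paper intends: its own proof of Theorem \ref{thm3.4} is the one line ``dual to Theorem \ref{thm3.3}'', and you have carried out that dualization faithfully (disks $D^n(X)$ and spheres $S^n(U)$ as test objects, degreewise splitness plus Lemma \ref{lem2.1} for the orthogonality, and the projective generator $R$ in place of the injective cogenerator of Lemma \ref{lem3.1} to detect exactness). Your derivation of $\Ext^1_{\text{Ch}(R)}(S^{n+1}(U),Y)\cong\Ext^1_R(U,Z_{n+1}Y)$ from the long exact sequence is sound and matches the way the paper itself argues in Proposition \ref{Prop3.12}.
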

\begin{proof} It is dual to the proof of
Theorem \ref{thm3.3}.
\end{proof}

\begin{df} Given a class  of $R$-modules $\mathcal{A}$. We define
the following classes of chain complexes in Ch($R$).
\begin{enumerate}
  \item $dw\widetilde{\mathcal{A}}$ is the class of all chain complexes  $A$
with each degree $A_n\in \mathcal{A}$.
  \item $ex\widetilde{\mathcal{A}}$ is the class of all exact  chain complexes  $A$
with each degree $A_n\in \mathcal{A}$.
  \item $\widetilde{\mathcal{A}}$ is the class of all exact  chain complexes  $A$ with
each cycle $Z_nA\in \mathcal{A}$.
\end{enumerate}
\end{df}

The $\lq\lq dw$" is meant to stand for $\lq\lq$degreewise" while the
$\lq\lq ex$" is meant to stand for $\lq\lq$exact".

Moreover, if we are given any cotorsion pair $(\mathcal{U},
\mathcal{V})$ in $R$-Mod, then following \cite{G04} we will denote
$\widetilde{\mathcal{U}}^\bot$ by $dg\widetilde{\mathcal{V}}$ and
$^\bot\widetilde{\mathcal{V}}$ by $dg\widetilde{\mathcal{U}}$.

 The next two corollaries
are contained in \cite[Proposition 3.6]{G04}, and \cite[Proposition
3.3]{G08}, respectively, but the author considered them on a general
abelian category. Here we present short proofs of them for our case.

\begin{cor}\label{c04} Let $(\mathcal{U}, \mathcal{V})$ be a cotorsion pair in $R$-Mod. Then
$(\widetilde{\mathcal{U}}, dg\widetilde{\mathcal{V}})$ and
$(dg\widetilde{\mathcal{U}}, \widetilde{\mathcal{V}})$ are cotorsion
pairs in Ch($R$).
\end{cor}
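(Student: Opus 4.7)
The plan is to obtain both cotorsion pairs as special cases of Theorems \ref{thm3.3} and \ref{thm3.4}, applied to the cotorsion pair $(\mathcal{U},\mathcal{V})$ paired with itself. Taking $(\mathcal{X},\mathcal{Y})=(\mathcal{U},\mathcal{V})$ in Theorem \ref{thm3.3} gives that $(\widetilde{\mathcal{U}}_\mathcal{U},(\widetilde{\mathcal{U}}_\mathcal{U})^\bot)$ is a cotorsion pair in $\text{Ch}(R)$, and taking the same choice in Theorem \ref{thm3.4} gives that $({}^\bot(\widetilde{\mathcal{V}}_\mathcal{V}),\widetilde{\mathcal{V}}_\mathcal{V})$ is a cotorsion pair. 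Thus the corollary will reduce to identifying $\widetilde{\mathcal{U}}_\mathcal{U}$ with $\widetilde{\mathcal{U}}$ and $\widetilde{\mathcal{V}}_\mathcal{V}$ with $\widetilde{\mathcal{V}}$, after which the orthogonal classes are $dg\widetilde{\mathcal{V}}$ and $dg\widetilde{\mathcal{U}}$ by definition.

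The substantive step is the observation that the left class $\mathcal{U}$ and the right class $\mathcal{V}$ of a cotorsion pair are each closed under extensions. For $\mathcal{U}$ this follows from the long exact sequence of $\Ext^1_R(-,V)$ with $V\in\mathcal{V}$: if $0\to U_1\to X\to U_2\to0$ with $U_1,U_2\in\mathcal{U}$, then $\Ext^1_R(X,V)=0$ for every $V\in\mathcal{V}$, so $X\in{}^\perp\mathcal{V}=\mathcal{U}$. The argument for $\mathcal{V}$ is symmetric.

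From this closure I would deduce the identifications as follows. If $U$ is an exact complex with every cycle $Z_nU\in\mathcal{U}$, then each short exact sequence $0\to Z_nU\to U_n\to Z_{n-1}U\to 0$ realizes $U_n$ as an extension of objects in $\mathcal{U}$, hence $U_n\in\mathcal{U}$; this shows $\widetilde{\mathcal{U}}\subseteq\widetilde{\mathcal{U}}_\mathcal{U}$, and the reverse inclusion is immediate from the definitions. The same argument, applied to $\mathcal{V}$, yields $\widetilde{\mathcal{V}}=\widetilde{\mathcal{V}}_\mathcal{V}$. Combining this with the theorems gives $(\widetilde{\mathcal{U}},\widetilde{\mathcal{U}}^\bot)=(\widetilde{\mathcal{U}},dg\widetilde{\mathcal{V}})$ and $({}^\bot\widetilde{\mathcal{V}},\widetilde{\mathcal{V}})=(dg\widetilde{\mathcal{U}},\widetilde{\mathcal{V}})$ as cotorsion pairs in $\text{Ch}(R)$.

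I do not foresee a serious obstacle: the real work has already been done in Theorems \ref{thm3.3} and \ref{thm3.4}, and the only new ingredient is the standard extension-closure property of the two halves of a cotorsion pair. The proof should therefore be short, consisting of the two identifications together with the citation of the ambient theorems.
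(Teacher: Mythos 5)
Your proposal is correct and follows exactly the paper's route: apply Theorems \ref{thm3.3} and \ref{thm3.4} with $(\mathcal{X},\mathcal{Y})=(\mathcal{U},\mathcal{V})$ and identify $\widetilde{\mathcal{U}}_\mathcal{U}=\widetilde{\mathcal{U}}$ and $\widetilde{\mathcal{V}}_\mathcal{V}=\widetilde{\mathcal{V}}$. The paper dismisses the identification as ``clear,'' whereas you justify it via extension-closure of the two halves of a cotorsion pair; that is the right justification.
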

\begin{proof} We just prove one of the statements since the other is dual. Note that
$(\mathcal{X}, \mathcal{Y})=(\mathcal{U}, \mathcal{V})$ is another
cotorsion pair with $\mathcal{U}\subseteq\mathcal{X}$. So, by
Theorem \ref{thm3.3}, $(\widetilde{\mathcal{U}},
\widetilde{\mathcal{U}}^\bot)$ is a cotorsion pair since clearly
$\widetilde{\mathcal{U}}=\widetilde{\mathcal{U}}_\mathcal{U}$.
\end{proof}

In the following, we take $(\mathcal{P}, \mathcal{M})$ and
$(\mathcal{M}, \mathcal{I})$ as the usual projective and injective
cotorsion pairs in $R$-Mod, where $\mathcal{P}$ denotes the class of
all projective $R$-modules, $\mathcal{M}$ denotes the class of all
$R$-modules, and $\mathcal{I}$ denotes the class of all injective
$R$-modules. Note that for any cotorsion pair $(\mathcal{U},
\mathcal{V})$ in $R$-Mod we always have inclusions
$\mathcal{P}\subseteq \mathcal{U}\subseteq \mathcal{M}$ and
$\mathcal{I}\subseteq \mathcal{V}\subseteq \mathcal{M}$.

\begin{cor}\label{c05} Let $(\mathcal{U}, \mathcal{V})$ be a cotorsion pair in $R$-Mod. Then
$(ex\widetilde{\mathcal{U}}, (ex\widetilde{\mathcal{U}})^\bot)$ and
$(^\bot(ex\widetilde{\mathcal{V}}), ex\widetilde{\mathcal{V}})$ are
cotorsion pairs in Ch($R$).
\end{cor}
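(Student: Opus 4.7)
The plan is to derive both cotorsion pairs as special cases of Theorems \ref{thm3.3} and \ref{thm3.4}, by pairing the given cotorsion pair $(\mathcal{U}, \mathcal{V})$ with the trivial projective or injective cotorsion pair so that the ``cycle'' condition in $\widetilde{\mathcal{U}}_{\mathcal{X}}$ becomes vacuous. The key observation I would use is the remark made just before the statement: for every cotorsion pair $(\mathcal{U},\mathcal{V})$ in $R$-Mod one has $\mathcal{P}\subseteq\mathcal{U}\subseteq\mathcal{M}$ and $\mathcal{I}\subseteq\mathcal{V}\subseteq\mathcal{M}$, so the inclusion hypothesis of both theorems is automatic when one side is the projective or injective pair.

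For the first assertion, I would apply Theorem \ref{thm3.3} to the pairs $(\mathcal{U},\mathcal{V})$ and $(\mathcal{X},\mathcal{Y})=(\mathcal{M},\mathcal{I})$, using that $\mathcal{U}\subseteq\mathcal{M}$. Since $\mathcal{M}$ imposes no restriction on cycles, one has $\widetilde{\mathcal{U}}_{\mathcal{M}} = ex\widetilde{\mathcal{U}}$ by the very definitions, and the theorem immediately yields the cotorsion pair $(ex\widetilde{\mathcal{U}}, (ex\widetilde{\mathcal{U}})^{\bot})$ in $\mathrm{Ch}(R)$.

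For the second assertion, I would apply Theorem \ref{thm3.4} to the pairs $(\mathcal{U}',\mathcal{V}')=(\mathcal{P},\mathcal{M})$ and $(\mathcal{X},\mathcal{Y})=(\mathcal{U},\mathcal{V})$, the required inclusion $\mathcal{P}\subseteq\mathcal{U}$ again being automatic. In this case $\widetilde{\mathcal{Y}}_{\mathcal{V}'}=\widetilde{\mathcal{V}}_{\mathcal{M}}$, and since being in $\mathcal{M}$ puts no constraint on the cycles, this class coincides with $ex\widetilde{\mathcal{V}}$. Theorem \ref{thm3.4} then gives the cotorsion pair $({}^{\bot}(ex\widetilde{\mathcal{V}}), ex\widetilde{\mathcal{V}})$.

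There is no real obstacle here: once one recognizes that the extra cycle condition disappears when the auxiliary cotorsion pair is chosen at either extreme, the corollary is just two clean applications of the preceding theorems. The only thing to be careful about is matching the roles of the four classes $\mathcal{U},\mathcal{V},\mathcal{X},\mathcal{Y}$ in each theorem so that the inclusion hypothesis $\mathcal{U}\subseteq\mathcal{X}$ (equivalently $\mathcal{Y}\subseteq\mathcal{V}$) is the correct one.
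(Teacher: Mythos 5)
Your proposal is correct and matches the paper's own argument: the paper likewise obtains the first pair by applying Theorem \ref{thm3.3} with $(\mathcal{X},\mathcal{Y})=(\mathcal{M},\mathcal{I})$ and the identification $ex\widetilde{\mathcal{U}}=\widetilde{\mathcal{U}}_{\mathcal{M}}$, and dismisses the second as dual, which is precisely your application of Theorem \ref{thm3.4} with the projective pair $(\mathcal{P},\mathcal{M})$. You have simply spelled out the dual half that the paper leaves implicit.
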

\begin{proof} Again we will just prove one of the statements since the other is dual. Note that
the injective cotorsion pair $(\mathcal{M}, \mathcal{I})$ in $R$-Mod
is another one such that $\mathcal{U}\subseteq\mathcal{M}$. So, by
Theorem \ref{thm3.3}, $(ex\widetilde{\mathcal{U}},
(ex\widetilde{\mathcal{U}})^\bot)$ is a cotorsion pair since clearly
$ex\widetilde{\mathcal{U}}=\widetilde{\mathcal{U}}_\mathcal{M}$.
\end{proof}

\begin{rem}\label{re3.7} According to  \cite{YL}, the induced cotorsion pairs
$(\widetilde{\mathcal{U}}, dg\widetilde{\mathcal{V}})$ and
$(dg\widetilde{\mathcal{U}}, \widetilde{\mathcal{V}})$ are both
complete when the given cotorsion pair  $(\mathcal{U}, \mathcal{V})$
is hereditary and complete. In particular,
$(dg\widetilde{\mathcal{P}}, \widetilde{\mathcal{M}})$ and
$(\widetilde{\mathcal{M}}, dg\widetilde{\mathcal{I}})$ are complete
cotorsion pairs in Ch($R$), where $\widetilde{\mathcal{M}}$ denotes
the class of all exact chain complexes.
\end{rem}

\begin{rem} \label{re3.8} According to  \cite{G08}, if we have a cotorsion pair $(\mathcal{U},
\mathcal{V})$ in $R$-Mod, then $(dw\widetilde{\mathcal{U}},
{(dw\widetilde{\mathcal{U}})}^\bot)$ and
$(^\bot{(dw\widetilde{\mathcal{V}})}, dw\widetilde{\mathcal{V}})$
are cotorsion pairs in Ch($R$).
\end{rem}

  The following result shows that there are
intimate connections of completeness between the induced cotorsion
pairs $(dw\widetilde{\mathcal{U}},
{(dw\widetilde{\mathcal{U}})}^\bot)$ and
$(ex\widetilde{\mathcal{U}}, {(ex\widetilde{\mathcal{U}})}^\bot)$.

\begin{thm} \label{rhm3.9}
Assume that $(\mathcal{U}, \mathcal{V})$ is a hereditary cotorsion pair
in $R$-Mod. Then $(dw\widetilde{\mathcal{U}},
(dw\widetilde{\mathcal{U}})^\bot)$ is complete if and only if
$(ex\widetilde{\mathcal{U}}, (ex\widetilde{\mathcal{U}})^\bot)$ is
complete.
\end{thm}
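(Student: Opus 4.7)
The overall strategy is to transfer completeness between the two cotorsion pairs by using, as intermediaries, the two complete cotorsion pairs recalled in Remark \ref{re3.7}: namely $(dg\widetilde{\mathcal{P}}, \widetilde{\mathcal{M}})$ and $(\widetilde{\mathcal{M}}, dg\widetilde{\mathcal{I}})$, where $\widetilde{\mathcal{M}}$ is the class of exact complexes. The hereditariness of $(\mathcal{U}, \mathcal{V})$ is essential here because it makes $\mathcal{U}$ closed under extensions and under kernels of epimorphisms; in particular $dw\widetilde{\mathcal{U}}$ is closed under extensions, and since $\mathcal{P}\subseteq\mathcal{U}$ we have $dg\widetilde{\mathcal{P}}\subseteq dw\widetilde{\mathcal{U}}$, so extensions of a $dw\widetilde{\mathcal{U}}$-complex by a $dg\widetilde{\mathcal{P}}$-complex remain in $dw\widetilde{\mathcal{U}}$. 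I shall also freely use the inclusions $(dw\widetilde{\mathcal{U}})^\bot\subseteq(ex\widetilde{\mathcal{U}})^\bot$ (since $ex\widetilde{\mathcal{U}}\subseteq dw\widetilde{\mathcal{U}}$) and $dg\widetilde{\mathcal{I}}\subseteq(ex\widetilde{\mathcal{U}})^\bot$ (since $ex\widetilde{\mathcal{U}}\subseteq\widetilde{\mathcal{M}}$ and $dg\widetilde{\mathcal{I}}=\widetilde{\mathcal{M}}^\bot$), together with the closure of right-orthogonal classes under extensions.

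For the forward direction, given $X\in\text{Ch}(R)$ I would produce enough projectives for $(ex\widetilde{\mathcal{U}}, (ex\widetilde{\mathcal{U}})^\bot)$ as follows. First use enough projectives for $(\widetilde{\mathcal{M}}, dg\widetilde{\mathcal{I}})$ to form $0\to Y\to E\to X\to 0$ with $E$ exact and $Y\in dg\widetilde{\mathcal{I}}$. Then apply the hypothesis to $E$ to obtain $0\to B\to U\to E\to 0$ with $U\in dw\widetilde{\mathcal{U}}$ and $B\in(dw\widetilde{\mathcal{U}})^\bot$. Composing yields $0\to N\to U\to X\to 0$, where $N$ fits in $0\to B\to N\to Y\to 0$, so $N\in(ex\widetilde{\mathcal{U}})^\bot$ automatically. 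Because $E$ is exact, the long exact sequence of homology gives $H_\ast(U)\cong H_\ast(B)$, so $U$ need not be exact. To repair this, I would apply enough injectives of $(dg\widetilde{\mathcal{P}}, \widetilde{\mathcal{M}})$ to $U$: this embeds $U\hookrightarrow M$ with $M$ exact and $M/U\in dg\widetilde{\mathcal{P}}$, and hereditariness gives $M\in ex\widetilde{\mathcal{U}}$. A pullback diagram combining $U\hookrightarrow M$ with $U\twoheadrightarrow X$ then yields a candidate sequence $0\to N'\to A\to X\to 0$ in which each $A_n\in\mathcal{U}$ (again by hereditariness applied to $0\to A_n\to M_n\to(M/U)_n\to 0$).

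The principal obstacle I foresee is ensuring that $A$ itself is exact: in the naive pullback, exactness of $A$ reduces via the long exact sequence to exactness of $M/U\in dg\widetilde{\mathcal{P}}$, which is not automatic. I would try to circumvent this either by a more refined choice of second approximation---for instance replacing $M$ by an exact contractible $dw\widetilde{\mathcal{U}}$-complex built from disks $D^n(U_n)$ engineered to cover the homology of $U$---or by iterating the construction and passing to a suitable colimit so that the remaining defect dies. The reverse implication is dual in spirit: starting from an approximation $0\to B_1\to A\to X\to 0$ afforded by $(ex\widetilde{\mathcal{U}}, (ex\widetilde{\mathcal{U}})^\bot)$, the left term $A$ already lies in $dw\widetilde{\mathcal{U}}$, and the task is to promote $B_1\in(ex\widetilde{\mathcal{U}})^\bot$ into a complex in the smaller class $(dw\widetilde{\mathcal{U}})^\bot$; using $(\widetilde{\mathcal{M}}, dg\widetilde{\mathcal{I}})$-enough injectives on $B_1$ together with a pushout against the original sequence should accomplish this, the analogous obstacle being that the new middle term must stay in $dw\widetilde{\mathcal{U}}$---which is again exactly where hereditariness is decisive.
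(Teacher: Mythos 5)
Your overall strategy---shuttling between the two pairs via the complete pairs $(dg\widetilde{\mathcal{P}},\widetilde{\mathcal{M}})$ and $(\widetilde{\mathcal{M}},dg\widetilde{\mathcal{I}})$ of Remark \ref{re3.7} and pullback/pushout diagrams---is the paper's strategy, but both directions as written have genuine gaps. In the forward direction your ``principal obstacle'' is not an obstacle at all; what you are missing is the one fact that makes the argument close: every complex in $(dw\widetilde{\mathcal{U}})^\bot$ is exact. Indeed $R\in\mathcal{P}\subseteq\mathcal{U}$, so every sphere $S^n(R)$ lies in $dw\widetilde{\mathcal{U}}$; hence for $B\in(dw\widetilde{\mathcal{U}})^\bot$ every chain map $S^{n-1}(R)\rightarrow B$ extends over $D^n(R)$, which forces $B$ to be exact (the same argument as in Lemma \ref{lem3.1} and in the proof of Proposition \ref{Prop3.12}). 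With this in hand your own two-step construction already finishes: in $0\rightarrow B\rightarrow U\rightarrow E\rightarrow 0$ both $B$ and $E$ are exact, so $U$ is exact and lies in $ex\widetilde{\mathcal{U}}$, and your composite $0\rightarrow N\rightarrow U\rightarrow X\rightarrow 0$ is the desired special precover. By contrast, the repairs you propose are not viable: the cokernel $M/U\in dg\widetilde{\mathcal{P}}$ is indeed not exact in general, and neither the ``engineered disks'' nor the colimit idea is substantiated. (The paper orders the steps differently---first a degreewise cover of $C$, then an exact resolution of that, then a degreewise cover of the exact complex---but it too relies on exactly this exactness of complexes in $(dw\widetilde{\mathcal{U}})^\bot$ at the decisive moment.)

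The reverse direction is only a sketch and omits its essential ingredient. After the first pushout, which replaces $H\in(ex\widetilde{\mathcal{U}})^\bot$ by an exact complex $E$ with cokernel in $dg\widetilde{\mathcal{P}}$ (note: this uses enough injectives of $(dg\widetilde{\mathcal{P}},\widetilde{\mathcal{M}})$, not of $(\widetilde{\mathcal{M}},dg\widetilde{\mathcal{I}})$ as you wrote---with the latter the new middle term acquires arbitrary exact degrees and leaves $dw\widetilde{\mathcal{U}}$), you only know that $E$ is exact, not that $E\in(dw\widetilde{\mathcal{U}})^\bot$. The paper needs a second approximation here: embed $E$ into some $H'$ with $H'\in(ex\widetilde{\mathcal{U}})^\bot$ and cokernel $G'\in ex\widetilde{\mathcal{U}}$, deduce that $H'$ is exact, and invoke the identity $(ex\widetilde{\mathcal{U}})^\bot\cap\widetilde{\mathcal{M}}=(dw\widetilde{\mathcal{U}})^\bot$ from \cite[Lemma 7.4.1]{EJ} (this is where the hereditarity hypothesis is actually consumed), followed by one more pushout to keep the middle term in $dw\widetilde{\mathcal{U}}$. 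None of this appears in your outline, so the promotion of the kernel into $(dw\widetilde{\mathcal{U}})^\bot$---which you correctly identify as ``the task''---is left unproved.
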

\begin{proof}  ($\Rightarrow$). Since the cotorsion pair $(dw\widetilde{\mathcal{U}},
(dw\widetilde{\mathcal{U}})^\bot)$ is complete, for any chain
complex $C$, there exists an exact sequence $0\rightarrow
V\rightarrow U\rightarrow C\rightarrow 0$ such that $U\in
dw\widetilde{\mathcal{U}}$ and $V\in
(dw\widetilde{\mathcal{U}})^\bot$. By  Remark \ref{re3.7}, there is
an exact sequence $0\rightarrow I\rightarrow E\rightarrow
U\rightarrow 0$ with  $E$ exact and $I\in
dg\widetilde{\mathcal{I}}$. Now we consider the pull-back diagram as
follows:
\[
\begin{CD}
@.0 @.  0 \\
@.  @VVV @VVV \\
@.I @= I \\
@.  @VVV @VVV \\
0 @>>> X @>>> E @>>> C@>>> 0\\
@.  @VVV @VVV @|\\
0@>>> V @>>> U @>>> C@>>> 0 \\
@.  @VVV @VVV \\
@.0 @.  0
\end{CD}
\]
Since $I\in dg\widetilde{\mathcal{I}}$, we have $I\in
(ex\widetilde{\mathcal{U}})^\bot$. Clearly, we have $V\in
(ex\widetilde{\mathcal{U}})^\bot$ since $V\in
(dw\widetilde{\mathcal{U}})^\bot$, and so the exactness of the
leftmost column of the above diagram implies $X\in
(ex\widetilde{\mathcal{U}})^\bot$. By hypothesis again, there is an
exact sequence $0\rightarrow V'\rightarrow U'\rightarrow
E\rightarrow 0$ such that $U'\in dw\widetilde{\mathcal{U}}$ and
$V'\in (dw\widetilde{\mathcal{U}})^\bot$. Again consider the
following pull-back diagram:
\[
\begin{CD}
@.0 @.  0 \\
@.  @VVV @VVV \\
@.V' @= V' \\
@.  @VVV @VVV \\
0 @>>> Y @>>> U' @>>> C@>>> 0\\
@.  @VVV @VVV @|\\
0@>>> X @>>> E @>>> C@>>> 0 \\
@.  @VVV @VVV \\
@.0 @.  0
\end{CD}
\]
Since $E$ is exact and $V'\in (dw\widetilde{\mathcal{U}})^\bot$ is
easily seen exact, we get that $U'$ is exact and so $U'\in
ex\widetilde{\mathcal{U}}$. Furthermore, since $V'\in
(dw\widetilde{\mathcal{U}})^\bot\subseteq
(ex\widetilde{\mathcal{U}})^\bot$ and $X\in
(ex\widetilde{\mathcal{U}})^\bot$, the exactness of the leftmost
column of the above diagram implies $Y\in
(ex\widetilde{\mathcal{U}})^\bot$. Now the second exact row of the
above diagram implies that the cotorsion pair
$(ex\widetilde{\mathcal{U}}, (ex\widetilde{\mathcal{U}})^\bot)$ has
enough projectives, and so it is complete.

($\Leftarrow$). Let $C$ be any chain complex.  Then there is an
exact sequence $0\rightarrow H\rightarrow G\rightarrow C\rightarrow
0$ with $G\in ex\widetilde{\mathcal{U}}$ and $H\in
(ex\widetilde{\mathcal{U}})^\bot$ since the cotorsion pair
$(ex\widetilde{\mathcal{U}}, (ex\widetilde{\mathcal{U}})^\bot)$ is
complete. By Remark \ref{re3.7}, there is  an exact sequence
$0\rightarrow H\rightarrow E\rightarrow P\rightarrow 0$ with $E$
exact and $P\in dg\mathcal{\widetilde{P}}$. Consider the following
push-out diagram:
\[
\begin{CD}
@.0 @.  0\\
@. @VVV  @VVV\\
0 @>>> H @>>> G @>>> C@>>> 0\\
@.  @VVV @VVV @|\\
0@>>> E @>>> D @>>> C@>>> 0 \\
@.  @VVV @VVV \\
@.P @=  P\\
@.  @VVV @VVV \\
@.0 @.  0
\end{CD}
\]
Since $G\in ex\widetilde{\mathcal{U}}\subseteq
dw\widetilde{\mathcal{U}}$  and $P$ is easily seen in
$dw\widetilde{\mathcal{U}}$, we get that $D\in
dw\widetilde{\mathcal{U}}$. Since the cotorsion pair
$(ex\widetilde{\mathcal{U}}, (ex\widetilde{\mathcal{U}})^\bot)$ is
complete, there is an exact sequence $0\rightarrow E\rightarrow
H'\rightarrow G'\rightarrow 0$ with $H'\in
(ex\widetilde{\mathcal{U}})^\bot$ and $G'\in
ex\widetilde{\mathcal{U}}$. Now consider the following push-out
diagram:
\[
\begin{CD}
@.0 @.  0\\
@. @VVV  @VVV\\
0 @>>> E @>>> D @>>> C@>>> 0\\
@.  @VVV @VVV @|\\
0@>>> H' @>>> B @>>> C@>>> 0 \\
@.  @VVV @VVV \\
@.G' @=  G'\\
@.  @VVV @VVV \\
@.0 @.  0
\end{CD}
\]
Since $E$ and $G'$ are exact, so is $H'$. Thus it follows from
\cite[Lemma 7.4.1]{EJ} that $H'\in
(ex\widetilde{\mathcal{U}})^\bot\cap\widetilde{\mathcal{M}}=(dw\widetilde{\mathcal{U}})^\bot$.
  It is not hard to see that $B\in dw\widetilde{\mathcal{U}}$.
This proves that the cotorsion pair $(dw\widetilde{\mathcal{U}},
(dw\widetilde{\mathcal{U}})^\bot)$  is complete.
\end{proof}

Dually, we have the following result without giving its proof.

\begin{thm} \label{rhm3.11}
Assume that $(\mathcal{U}, \mathcal{V})$ is a hereditary cotorsion pair
in $R$-Mod. Considering the  statements below. Then
$({^\bot(dw\widetilde{\mathcal{V}})}, dw\widetilde{\mathcal{V}})$ is
complete if and only if $({^\bot(ex\widetilde{\mathcal{V}})},
ex\widetilde{\mathcal{V}})$ is complete.
\end{thm}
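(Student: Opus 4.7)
The strategy is to mimic the proof of Theorem \ref{rhm3.9} under the evident duality that exchanges the left and right classes of a cotorsion pair, replaces push-outs by pull-backs, and swaps the two complete pairs of Remark \ref{re3.7}; ``enough projectives'' becomes ``enough injectives'' and vice versa.

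For $(\Rightarrow)$ I will show $({^\bot(ex\widetilde{\mathcal{V}})}, ex\widetilde{\mathcal{V}})$ has enough injectives. Given a chain complex $C$, first apply the hypothesis to get $0\to C\to V\to U\to 0$ with $V\in dw\widetilde{\mathcal{V}}$ and $U\in {^\bot(dw\widetilde{\mathcal{V}})}$. Next, using enough injectives of $(dg\widetilde{\mathcal{P}},\widetilde{\mathcal{M}})$ from Remark \ref{re3.7}, embed $V$ into an exact complex $E$ with $P:=E/V\in dg\widetilde{\mathcal{P}}$. The composition $C\hookrightarrow V\hookrightarrow E$ together with the $3\times 3$ lemma yields $0\to C\to E\to X\to 0$ and $0\to U\to X\to P\to 0$; since ${^\bot(dw\widetilde{\mathcal{V}})}\subseteq {^\bot(ex\widetilde{\mathcal{V}})}$ and $dg\widetilde{\mathcal{P}}={^\bot\widetilde{\mathcal{M}}}\subseteq {^\bot(ex\widetilde{\mathcal{V}})}$, I get $X\in {^\bot(ex\widetilde{\mathcal{V}})}$. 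As $E$ is only known to be exact, apply the hypothesis once more to $E$ to get $0\to E\to V'\to U'\to 0$ and form the induced $0\to C\to V'\to Y\to 0$ that fits in $0\to X\to Y\to U'\to 0$, so $Y\in {^\bot(ex\widetilde{\mathcal{V}})}$. The one nontrivial point is to show $V'\in ex\widetilde{\mathcal{V}}$; since $V'\in dw\widetilde{\mathcal{V}}$ and $E$ is exact, this reduces to showing $U'$ is exact. Here the hereditary hypothesis enters: for $J$ an injective cogenerator the sequence $0\to S^{n-1}(J)\to D^n(J)\to S^n(J)\to 0$ sits entirely in $dw\widetilde{\mathcal{V}}$, so $\Ext^1_{\text{Ch}(R)}(U',S^{n-1}(J))=0$ makes every chain map $U'\to S^n(J)$ lift through $D^n(J)$, and Lemma \ref{lem3.1} then forces $U'$ to be exact.

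For $(\Leftarrow)$ I will produce enough injectives of $({^\bot(dw\widetilde{\mathcal{V}})}, dw\widetilde{\mathcal{V}})$. Given $C$, completeness of $({^\bot(ex\widetilde{\mathcal{V}})}, ex\widetilde{\mathcal{V}})$ supplies $0\to C\to G\to H\to 0$ with $G\in ex\widetilde{\mathcal{V}}$ and $H\in {^\bot(ex\widetilde{\mathcal{V}})}$; enough projectives of $(\widetilde{\mathcal{M}},dg\widetilde{\mathcal{I}})$ applied to $H$ gives $0\to I\to E\to H\to 0$ with $E$ exact and $I\in dg\widetilde{\mathcal{I}}$. Pulling back $G\to H$ along $E\to H$ produces $D$ sitting in $0\to C\to D\to E\to 0$ and $0\to I\to D\to G\to 0$; since $dg\widetilde{\mathcal{I}}\subseteq dw\widetilde{\mathcal{V}}$ (injective modules lie in $\mathcal{V}$) and $ex\widetilde{\mathcal{V}}\subseteq dw\widetilde{\mathcal{V}}$, the middle column forces $D\in dw\widetilde{\mathcal{V}}$. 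Now apply enough projectives of $({^\bot(ex\widetilde{\mathcal{V}})}, ex\widetilde{\mathcal{V}})$ to $E$ to obtain $0\to G'\to H'\to E\to 0$, and pull back $D\to E$ along $H'\to E$ to get the desired $0\to C\to B\to H'\to 0$ together with $0\to G'\to B\to D\to 0$; the latter gives $B\in dw\widetilde{\mathcal{V}}$.

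The main obstacle, as in Theorem \ref{rhm3.9}, is to verify $H'\in {^\bot(dw\widetilde{\mathcal{V}})}$. Since $G'$ and $E$ are both exact, $H'$ is exact and so lies in ${^\bot(ex\widetilde{\mathcal{V}})}\cap\widetilde{\mathcal{M}}$; the dual of \cite[Lemma 7.4.1]{EJ}, which is the formal counterpart of the identity $(ex\widetilde{\mathcal{U}})^\bot\cap\widetilde{\mathcal{M}}=(dw\widetilde{\mathcal{U}})^\bot$ used in the proof of Theorem \ref{rhm3.9}, then identifies this intersection with ${^\bot(dw\widetilde{\mathcal{V}})}$. Together with the exactness argument for $U'$ above, this is the only non-formal input; the rest is diagram chasing.
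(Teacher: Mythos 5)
Your argument is correct and is precisely the dualization of the proof of Theorem \ref{rhm3.9} that the paper intends (the paper omits the proof entirely, saying only ``dually''): you swap push-outs for pull-backs, use the two complete pairs $(dg\widetilde{\mathcal{P}},\widetilde{\mathcal{M}})$ and $(\widetilde{\mathcal{M}},dg\widetilde{\mathcal{I}})$ in the opposite roles, and replace the identity $(ex\widetilde{\mathcal{U}})^\bot\cap\widetilde{\mathcal{M}}=(dw\widetilde{\mathcal{U}})^\bot$ by its mirror ${}^\bot(ex\widetilde{\mathcal{V}})\cap\widetilde{\mathcal{M}}={}^\bot(dw\widetilde{\mathcal{V}})$, with Lemma \ref{lem3.1} supplying the exactness of $U'$ exactly as designed. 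One small misattribution: the lifting argument for exactness of $U'$ uses only $\mathcal{I}\subseteq\mathcal{V}$ and $U'\in{}^\bot(dw\widetilde{\mathcal{V}})$, not the hereditary hypothesis, which is really needed for the \cite{EJ}-type identity in the other direction.
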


Assume that the given cotorsion pairs $(\mathcal{U}, \mathcal{V})$
and $(\mathcal{X}, \mathcal{Y})$  with
$\mathcal{U}\subseteq\mathcal{X}$ in $R$-Mod are hereditary, then it
is easily seen that the induced cotorsion pairs
$(\widetilde{\mathcal{U}}_\mathcal{X},
(\widetilde{\mathcal{U}}_\mathcal{X})^\bot)$ and
$(^\bot(\widetilde{\mathcal{Y}}_\mathcal{V}),
\widetilde{\mathcal{Y}}_\mathcal{V})$ are both hereditary. In the
following, we are ready to show that our induced cotorsion pairs are
complete under certain conditions. We will use a generalized
version, of a well-known result of Eklof and Trlifaj \cite[Theorem
10]{ET}, which says that every cotorsion pair $(\mathcal{A},
\mathcal{B})$ in any Grothendieck category with enough projectives
is complete if it is cogenerated by a set, see \cite[Section
6]{Hov}. We say a cotorsion pair $(\mathcal{A}, \mathcal{B})$ in an
abelian category $\mathcal{C}$ is cogenerated by a set if there is a
set $\mathcal{S}\subseteq\mathcal{A}$ such that
$\mathcal{S}^\bot=\mathcal{B}$.

\begin{prop}\label{Prop3.12} Let $(\mathcal{U}, \mathcal{V})$ and $(\mathcal{X},
\mathcal{Y})$ be two cotorsion pairs  with
$\mathcal{U}\subseteq\mathcal{X}$ in $R$-Mod. If $(\mathcal{U},
\mathcal{V})$ is cogenerated by a set $\{A_i|i\in I\}$, and
$(\mathcal{X}, \mathcal{Y})$ is cogenerated by a set $\{B_j|i\in
J\}$, then the induced cotorsion pair
$(^\bot(\widetilde{\mathcal{Y}}_\mathcal{V}),
\widetilde{\mathcal{Y}}_\mathcal{V})$ is cogenerated by the set $
\mathcal{S}=\{S^n(R)|n\in \mathbb{Z}\}\cup\{S^n(A_i)|n\in
\mathbb{Z}, i\in I\}\cup\{D^n(B_j)|n\in \mathbb{Z}, j\in J\}$, and
so it is complete.
\end{prop}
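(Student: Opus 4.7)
The plan is to verify that $\mathcal{S}^\bot=\widetilde{\mathcal{Y}}_\mathcal{V}$; once this equality is in hand, completeness follows immediately from the generalized Eklof--Trlifaj theorem (cited from \cite[Section 6]{Hov} in the paragraph preceding the statement), since $\text{Ch}(R)$ is a Grothendieck category with enough projectives.

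For the inclusion $\widetilde{\mathcal{Y}}_\mathcal{V}\subseteq\mathcal{S}^\bot$, I would go through each generator in $\mathcal{S}$ and verify that it belongs to ${^\bot(\widetilde{\mathcal{Y}}_\mathcal{V})}$; by Theorem \ref{thm3.4} this amounts to checking that its degrees lie in $\mathcal{X}$ and that every chain map into a $Y\in\widetilde{\mathcal{Y}}_\mathcal{V}$ is null homotopic. The disks $D^n(B_j)$ are contractible, so null homotopy is automatic once $B_j\in\mathcal{X}$ is noted. For spheres $S^n(A)$ with $A=R$ or $A=A_i$, the degree condition is clear since $R\in\mathcal{P}\subseteq\mathcal{X}$ and $A_i\in\mathcal{U}\subseteq\mathcal{X}$; a chain map $S^n(A)\to Y$ is determined by a homomorphism $A\to Z_nY=B_nY$ (using exactness of $Y$), and this lifts across the surjection $Y_{n+1}\twoheadrightarrow B_nY$ because $\Ext^1_R(A,Z_{n+1}Y)=0$---either because $A=R$ is projective, or because $A_i\in\mathcal{U}$ and $Z_{n+1}Y\in\mathcal{V}$. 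Such a lift supplies the required null homotopy.

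For $\mathcal{S}^\bot\subseteq\widetilde{\mathcal{Y}}_\mathcal{V}$, take $C\in\mathcal{S}^\bot$ and check the three defining conditions. The isomorphism $\Ext^1_{\text{Ch}(R)}(D^n(B_j),C)\cong\Ext^1_R(B_j,C_n)$ already used in the proof of Theorem \ref{thm3.3} yields $C_n\in\{B_j\}^\bot=\mathcal{Y}$. Exactness of $C$ follows from the long exact Ext sequence attached to $0\to S^{n-1}(R)\to D^n(R)\to S^n(R)\to 0$: since $R$ is projective, $\Ext^1_{\text{Ch}(R)}(D^n(R),C)=0$, so the sequence collapses to give $\Ext^1_{\text{Ch}(R)}(S^n(R),C)\cong H_{n-1}(C)$, and vanishing on the left forces $C$ to be acyclic. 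The final condition $Z_nC\in\mathcal{V}$ is the delicate point: given any $R$-module extension $0\to Z_nC\to M\to A_i\to 0$, I would form the pushout $N=M\sqcup_{Z_nC}C_n$ along $Z_nC\hookrightarrow C_n$ and assemble a chain-complex extension
\[
0\longrightarrow C\longrightarrow E\longrightarrow S^n(A_i)\longrightarrow 0
\]
with $E_n=N$, $E_m=C_m$ for $m\ne n$, $\delta^E_{n+1}$ obtained by composing $\delta^C_{n+1}$ with $C_n\hookrightarrow N$, and $\delta^E_n\colon N\to C_{n-1}$ defined via the pushout property (using $\delta^C_n$ on $C_n$ and the zero map on $M$, which agree on $Z_nC$). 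Since this extension is killed by $\Ext^1_{\text{Ch}(R)}(S^n(A_i),C)=0$, it splits in $\text{Ch}(R)$; identifying $\Ker\delta^E_n$ with the image of $M$ in $N$ shows that the resulting section restricts to a module section of $M\twoheadrightarrow A_i$, and hence $\Ext^1_R(A_i,Z_nC)=0$ for every $i$, giving $Z_nC\in\mathcal{V}$.

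The main obstacle I anticipate is the pushout-extension construction in this last step---verifying that $E$ is a well-defined chain complex and, more subtly, identifying $\Ker\delta^E_n$ with the image of $M$ so that a chain-level section restricts to a module-level one. Once this is settled, the equality $\mathcal{S}^\bot=\widetilde{\mathcal{Y}}_\mathcal{V}$ is proved and the Eklof--Trlifaj theorem closes the argument.
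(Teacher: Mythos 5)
Your proposal is correct and follows the same route as the paper: establish $\mathcal{S}^\bot=\widetilde{\mathcal{Y}}_\mathcal{V}$ by checking that the spheres and disks lie in $^\bot(\widetilde{\mathcal{Y}}_\mathcal{V})$ and that any $C\in\mathcal{S}^\bot$ satisfies the three defining conditions (degrees in $\mathcal{Y}$ via $D^n(B_j)$, exactness via the sequence $0\to S^{n-1}(R)\to D^n(R)\to S^n(R)\to 0$, cycles in $\mathcal{V}$ via $S^n(A_i)$), then invoke the Eklof--Trlifaj theorem. The only difference is that where the paper cites \cite[Lemma 3.1]{G04} for $\Ext^1_{\text{Ch}(R)}(S^n(A_i),C)\cong\Ext^1_R(A_i,Z_nC)$ and \cite[Lemma 2.4]{G08} for exactness, you reprove the needed implications by hand; your pushout construction is sound, since $\Ker\delta^E_n$ is indeed the image of $M$ in $N$ (an element $\iota_M(m)+\iota_C(c)$ is killed by $\delta^E_n$ exactly when $c\in Z_nC$, whence it equals $\iota_M(m+c)$), so a chain-level splitting does restrict to a splitting of $M\twoheadrightarrow A_i$.
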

\begin{proof}
Dual to the proof of Theorem \ref{thm3.3}, we can prove that each
sphere $S^n(U)\in{^\bot(\widetilde{\mathcal{Y}}_\mathcal{V})}$
whenever $U\in\mathcal{U}$, and each disk
$D^n(X)\in{^\bot(\widetilde{\mathcal{Y}}_\mathcal{V})}$ whenever
$X\in\mathcal{X}$. Thus we have
$\mathcal{S}\subseteq{^\bot(\widetilde{\mathcal{Y}}_\mathcal{V})}$,
and so
$\mathcal{S}^\bot\supseteq(^\bot(\widetilde{\mathcal{Y}}_\mathcal{V}))^\bot=\widetilde{\mathcal{Y}}_\mathcal{V}$.
To see the reverse inclusion, now suppose $Y\in\mathcal{S}^\bot$.
Then $\Ext^1_{\text{Ch}(R)}(S^n(A_i), Y)=0$ for all $i\in I$. Since
$\Ext^1_{\text{Ch}(R)}(S^n(A_i), Y)\cong\Ext^1_R(A_i, Z_nY)$ by
\cite[Lemma 3.1]{G04}, and the cotorsion pair $(\mathcal{U},
\mathcal{V})$ is cogenerated by $\{A_i|i\in I\}$, we get that each
$Z_nY\in\mathcal{V}$.

Next we show that $Y$ is exact. If we apply $\Hom_{\text{Ch}(R)}(-,
Y)$ to the short exact sequence $0\rightarrow S^{n-1}(R)\rightarrow
D^n(R)\rightarrow S^n(R)\rightarrow 0$, then we have an induced
exact sequence of abelian groups
$$\Hom_{\text{Ch}(R)}(D^n(R),
Y)\rightarrow \Hom_{\text{Ch}(R)}(S^{n-1}(R),
Y)\rightarrow\Ext^1_{\text{Ch}(R)}(S^n(R), Y)=0.$$ This means that
every chain map $S^{n-1}(R)\rightarrow Y$ can be extended to
$D^n(R)$. So $Y$ is exact by \cite[Lemma 2.4]{G08}.

It is left to show that each degree $Y_n$ of $Y$ belongs to
$\mathcal{Y}$ for any integer $n\in \mathbb{Z}$. By \cite[Lemma
3.1]{G04}, we have $\Ext^1_R(B_j,
Y_n)\cong\Ext^1_{\text{Ch}(R)}(D^n(B_j), Y)=0$ for all $j\in J$.
Thus $Y_n\in\mathcal{Y}$ since the cotorsion pair $(\mathcal{X},
\mathcal{Y})$ is cogenerated by $\{B_j|i\in J\}$. This shows that $
\mathcal{S}$ cogenerates the cotorsion pair
$(^\bot(\widetilde{\mathcal{Y}}_\mathcal{V}),
\widetilde{\mathcal{Y}}_\mathcal{V})$.
\end{proof}

\begin{prop}\label{Prop3.13} Let $(\mathcal{U}, \mathcal{V})$ and $(\mathcal{X},
\mathcal{Y})$ be two cotorsion pairs  with
$\mathcal{U}\subseteq\mathcal{X}$ in $R$-Mod. If both $(\mathcal{U},
\mathcal{V})$ and $(\mathcal{X}, \mathcal{Y})$ are cogenerated by
sets, then so is the induced cotorsion pair
$(\widetilde{\mathcal{U}}_\mathcal{X},
(\widetilde{\mathcal{U}}_\mathcal{X})^\bot)$, and so it is complete.
\end{prop}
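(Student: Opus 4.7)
The plan is to exhibit a set $\mathcal{T}\subseteq\widetilde{\mathcal{U}}_\mathcal{X}$ with $\mathcal{T}^\bot=(\widetilde{\mathcal{U}}_\mathcal{X})^\bot$, from which completeness will follow via the generalised Eklof--Trlifaj theorem for the Grothendieck category $\text{Ch}(R)$ \cite[Section 6]{Hov}. Concretely, I would show that $\widetilde{\mathcal{U}}_\mathcal{X}$ is deconstructible, i.e.\ $\widetilde{\mathcal{U}}_\mathcal{X}=\text{Filt-}\mathcal{T}$ for a suitable set $\mathcal{T}$; then by Eklof's lemma every $V\in\mathcal{T}^\bot$ lies in $(\text{Filt-}\mathcal{T})^\bot=(\widetilde{\mathcal{U}}_\mathcal{X})^\bot$, while the reverse inclusion is automatic from $\mathcal{T}\subseteq\widetilde{\mathcal{U}}_\mathcal{X}$.

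The route to deconstructibility runs through a Kaplansky-type property for complexes. First I would fix a regular cardinal $\kappa\geq|R|+\aleph_0$ for which both $\mathcal{U}$ and $\mathcal{X}$ are $\kappa$-Kaplansky classes in $R$-Mod; such a $\kappa$ exists because each of $(\mathcal{U},\mathcal{V})$ and $(\mathcal{X},\mathcal{Y})$ is cogenerated by a set. The central technical step is the chain-level analogue: for any $U\in\widetilde{\mathcal{U}}_\mathcal{X}$ and any subset $S=(S_n)_{n\in\mathbb{Z}}\subseteq U$ with $\text{Card}(S)\leq\kappa$, to produce a subcomplex $N\subseteq U$ containing $S$ of cardinality at most $\kappa$ such that both $N$ and $U/N$ belong to $\widetilde{\mathcal{U}}_\mathcal{X}$.

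I would build such $N$ as a union $N=\bigcup_{k<\omega}N^{(k)}$, starting from any subcomplex $N^{(0)}\subseteq U$ of cardinality $\leq\kappa$ containing $S$. Each step $N^{(k)}\rightsquigarrow N^{(k+1)}$ interleaves two moves performed inside $U$: apply the $\kappa$-Kaplansky property of $\mathcal{U}$ degreewise to enlarge each $N^{(k)}_n$ so that both it and its quotient in $U_n$ lie in $\mathcal{U}$; apply the $\kappa$-Kaplansky property of $\mathcal{X}$ to the cycles $Z_nN^{(k)}\subseteq Z_nU$ so that both $Z_nN^{(k+1)}$ and $Z_nU/Z_nN^{(k+1)}$ lie in $\mathcal{X}$; and finally close up under the differentials so that the outcome is again a subcomplex (this only adjoins $\leq\kappa$ further elements). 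Since $\mathcal{U}$ and $\mathcal{X}$ are closed under countable ascending unions of their $\kappa$-small Kaplansky subobjects, at the limit we obtain $N_n,\,U_n/N_n\in\mathcal{U}$ and $Z_nN,\,Z_nU/Z_nN\in\mathcal{X}$; a degreewise snake-lemma argument applied to $0\to N\to U\to U/N\to 0$ then forces $N$ and $U/N$ to be exact, so both lie in $\widetilde{\mathcal{U}}_\mathcal{X}$. A standard transfinite induction now yields, for any $U\in\widetilde{\mathcal{U}}_\mathcal{X}$, a continuous well-ordered filtration by subcomplexes in $\widetilde{\mathcal{U}}_\mathcal{X}$ whose successive quotients are $\kappa$-small and again in $\widetilde{\mathcal{U}}_\mathcal{X}$; taking $\mathcal{T}$ to be a set of isomorphism representatives of such $\kappa$-small members of $\widetilde{\mathcal{U}}_\mathcal{X}$ finishes the argument.

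The principal obstacle is the interleaving in the Kaplansky step: the two enlargements can disturb one another --- the cycle enlargement may introduce new elements that violate the $\mathcal{U}$-condition in some degree, and symmetrically the degree enlargement can spoil the $\mathcal{X}$-condition on cycles --- so the $\omega$-iteration must be arranged so that all four conditions on $N_n$, $U_n/N_n$, $Z_nN$ and $Z_nU/Z_nN$ hold simultaneously at the limit while the size bound $\leq\kappa$ is preserved; this relies essentially on the regularity of $\kappa$ and on the closure of the ambient Kaplansky classes under ascending unions of their $\kappa$-small subobjects.
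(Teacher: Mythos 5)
Your overall frame --- prove $\widetilde{\mathcal{U}}_\mathcal{X}$ is deconstructible, conclude that the pair is cogenerated by a set, and invoke the Eklof--Trlifaj/Hovey completeness theorem --- is exactly the paper's, but your execution of the key step has two genuine gaps. First, your limit step relies on the assertion that $\mathcal{U}$ and $\mathcal{X}$ are ``closed under countable ascending unions of their $\kappa$-small Kaplansky subobjects.'' This is not available from the hypotheses: being cogenerated by a set gives deconstructibility, hence the Kaplansky property, but a Kaplansky class need not be closed under well-ordered unions (the paper explicitly warns that the implication ``Kaplansky $\Rightarrow$ deconstructible'' requires closure under well-ordered direct limits, and that the converse implications fail in general). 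Nor can you rescue the union via Eklof's lemma, since the Kaplansky property hands you $N^{(k)}_n\in\mathcal{U}$ and $U_n/N^{(k)}_n\in\mathcal{U}$ but not $N^{(k+1)}_n/N^{(k)}_n\in\mathcal{U}$; extracting the latter from $0\to N^{(k+1)}_n/N^{(k)}_n\to U_n/N^{(k)}_n\to U_n/N^{(k+1)}_n\to 0$ would need $\mathcal{U}$ to be resolving, which is not assumed. This is precisely why the paper's Example following this proposition, which does run your interleaved Kaplansky argument, adds the hypotheses that $\mathcal{U},\mathcal{X}$ are closed under well-ordered direct limits and that $\mathcal{X}$ is resolving. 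Second, the claim that ``a degreewise snake-lemma argument forces $N$ and $U/N$ to be exact'' is false: the long exact homology sequence of $0\to N\to U\to U/N\to 0$ with $U$ exact only yields $H_n(U/N)\cong H_{n-1}(N)$, which does not vanish for free. Exactness of $N$ must be engineered into the construction (e.g.\ by adjoining, at each stage, $\delta$-preimages in $U_{n+1}$ of the cycles already collected in degree $n$), a step your closure-under-differentials move does not provide.

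The paper avoids all of this by quoting Stov\'{\i}\v{c}ek's machinery directly: since the pairs are cogenerated by sets, $\mathcal{U}$ and $\mathcal{X}$ are deconstructible; hence $dw\widetilde{\mathcal{U}}$ and $\widetilde{\mathcal{X}}$ are deconstructible classes of complexes by \cite[Theorem 4.2]{St}; and $\widetilde{\mathcal{U}}_\mathcal{X}=dw\widetilde{\mathcal{U}}\cap\widetilde{\mathcal{X}}$ is deconstructible because deconstructible classes are closed under intersection \cite[Proposition 2.9(2)]{St}. If you want a hands-on Kaplansky proof instead, you must either add the hypotheses of the paper's Example or replace the Kaplansky filtration argument by one that tracks $\mathcal{S}$-filtrations (in the sense of Definition \ref{df}) rather than mere subobject/quotient membership.
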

\begin{proof} Since $(\mathcal{U},
\mathcal{V})$ and $(\mathcal{X}, \mathcal{Y})$ are cogenerated by
sets, the two classes $\mathcal{U}$ and $\mathcal{X}$ are
deconstructible by \cite{St}. Then $dw\widetilde{\mathcal{U}}$ and
$\widetilde{\mathcal{X}}$ are deconstructible classes by
\cite[Theorem 4.2]{St}, and so
$\widetilde{\mathcal{U}}_\mathcal{X}=dw\widetilde{\mathcal{U}}\cap\widetilde{\mathcal{X}}$
is deconstructible by \cite[Proposition 2.9(2)]{St}. This implies
that the cotorsion pair $(\widetilde{\mathcal{U}}_\mathcal{X},
(\widetilde{\mathcal{U}}_\mathcal{X})^\bot)$ is cogenerated by a
set, and so it is complete.
\end{proof}

It is know that every Kaplansky class which is closed under well
ordered direct limits is deconstructible, and every deconstructible
class is Kaplansky. However, both of the converse do not hold in
general (see \cite[Lemmas 6.7 and 6.9, and Examlpe 6.8]{HT}). In the
following we present two examples relating to Kaplansky classes  as
applications of Proposition \ref{Prop3.13}.

\begin{exa} Let $(\mathcal{U}, \mathcal{V})$ and $(\mathcal{X},
\mathcal{Y})$ be two cotorsion pairs  with
$\mathcal{U}\subseteq\mathcal{X}$ in $R$-Mod. If $\mathcal{U}$ and
$\mathcal{X}$ are Kaplansky classes which are both closed under well
ordered direct limits, and $\mathcal{X}$ is resolving, then
$\widetilde{\mathcal{U}}_\mathcal{X}$ is a Kaplansky class of chain
complexes which is closed under  well ordered direct limits. Thus,
the cotorsion pair $(\widetilde{\mathcal{U}}_\mathcal{X},
(\widetilde{\mathcal{U}}_\mathcal{X})^\bot)$ is cogenerated by a
set, and so it is complete.
\end{exa}
\begin{proof} Let $\kappa_1, \kappa_2$ be the cardinal numbers such that
$\mathcal{U}$ is $\kappa_1$-Kaplansky, and $\mathcal{X}$ is
$\kappa_2$-Kaplansky. Let $\kappa$ be a cardinal number larger than
$\text{max}\{\kappa_1, \kappa_2, \omega, \text{Card}(R)\}$. In the
following, we wish to show that
$\widetilde{\mathcal{U}}_\mathcal{X}$ is $\kappa$-Kaplansky. So
assume that $U\in\widetilde{\mathcal{U}}_\mathcal{X}$ and $S$ is a
subset of $U$ with $\text{Card}(S)\leq\kappa$. We show that there
exists a chain subcomplex $W$ of $U$ such that $S\subseteq W$,
$\text{Card}(W)\leq\kappa$, and $W$ and $U/W$ are contained in
$\widetilde{\mathcal{U}}_\mathcal{X}$.

Note that $\widetilde{\mathcal{U}}_\mathcal{M}$ and
$\widetilde{\mathcal{X}}_\mathcal{X}$ are two $\kappa$-Kaplansky
classes  of chain complexes by \cite[Theorem 3.4]{AH}. Thus there
exists a chain subcomplex $U^1$ of $U$ such that $S\subseteq U^1$,
$\text{Card}(U^1)\leq\kappa$, and $U^1$ and $U/U^1$ are contained in
$\widetilde{\mathcal{U}}_\mathcal{M}$. Again, since
$\widetilde{\mathcal{X}}_\mathcal{X}$ is $\kappa$-Kaplansky, there
exists a chain subcomplex $U^2$ of $U$ such that $U^1\subseteq U^2$,
$\text{Card}(U^2)\leq\kappa$, and $U^2$ and $U/U^2$ are contained in
$\widetilde{\mathcal{X}}_\mathcal{X}$. Thus we will construct
inductively $\{U^i\}_{i\in \mathbb{N}}$ of chain subcomplexes of
$U$, satisfying the following three properties:

\begin{itemize}
  \item For any two  integers $i, j\in \mathbb{N}$ with
  $i<j$, $U^i$ is a chain subcomplex of $U^j$;
  \item $U^i$ satisfies
$\text{Card}(U^i)\leq\kappa$, and $U^i,
U/U^i\in\widetilde{\mathcal{U}}_\mathcal{M}$ whenever $i\in
\mathbb{N}$ is odd;
  \item  $U^i$ satisfies
$\text{Card}(U^i)\leq\kappa$, and $U^i,
U/U^i\in\widetilde{\mathcal{X}}_\mathcal{X}$ whenever $i\in
\mathbb{N}$ is even.
\end{itemize}

If we take $W=\varinjlim_{i\in \mathbb{N}}U^i=\bigcup_{i\in
\mathbb{N}}U^i$, then we see that the complex $W$ is exact because
of exactness of each $U^i$. Clearly, $W_n=\varinjlim_{i\in
\mathbb{N}}U_n^i=\bigcup_{i\in \mathbb{N}}U_n^i=\bigcup_{i\in
\mathbb{N}}U_n^{2i-1}$. But by constructions, we have each
$U^{2i-1}\in\widetilde{\mathcal{U}}_\mathcal{M}$. In particular,
$W_n\in\mathcal{U}$. Furthermore, each $Z_nW\in\mathcal{X}$ since
$Z_nW=\varinjlim_{i\in \mathbb{N}}Z_nU^i=\bigcup_{i\in
\mathbb{N}}Z_nU^i=\bigcup_{i\in \mathbb{N}}Z_nU^{2i}$ and
$U^{2i}\in\widetilde{\mathcal{X}}_\mathcal{X}$ by constructions.
Therefore the chain subcomplex $W$ of $U$ satisfies
$W\in\widetilde{\mathcal{U}}_\mathcal{X}$, $S\subseteq W$, and of
course  $\text{Card}(W)\leq\kappa$. To finish the proof, we need
only argue that $U/W\in\widetilde{\mathcal{U}}_\mathcal{X}$. It
follows from the short exact sequence $0\rightarrow W\rightarrow U
\rightarrow U/W\rightarrow 0$ of chain complexes that $U/W$ is
exact. Also one can check easily that $U/W=U/(\varinjlim_{i\in
\mathbb{N}}U^i)\cong\varinjlim_{i\in \mathbb{N}}U/U^i$, and then an
easy computation shows that $U_n/W_n\in\mathcal{U}$ and
$Z_n(U/W)\in\mathcal{X}$. This shows that
$\widetilde{\mathcal{U}}_\mathcal{X}$ is a Kaplansky class. Note
that $\widetilde{\mathcal{U}}_\mathcal{X}$ is also closed under well
ordered direct limits, and so it is deconstructible. This implies
that the cotorsion pair $(\widetilde{\mathcal{U}}_\mathcal{X},
(\widetilde{\mathcal{U}}_\mathcal{X})^\bot)$ is cogenerated by a
set.
\end{proof}

Recall that an exact sequence $0\rightarrow L\rightarrow M$ of
$R$-modules is pure if $0\rightarrow N\otimes_RL\rightarrow
N\otimes_RM$ is exact for any right $R$-module $N$. We say a
submodule $L$ of $M$ is pure if the sequence $0\rightarrow
L\rightarrow M$ is pure exact. Similarly, we have the notion of a
pure chain subcomplexes, but this will use tensor product of  chain
complexes (see \cite{GR}). It is easy to see that if a class of
$R$-modules is closed under pure submodules and cokernels of pure
monomorphisms, then it is Kaplansky, also it is deconstructible.

\begin{exa} Let $(\mathcal{U}, \mathcal{V})$ and $(\mathcal{X},
\mathcal{Y})$ be two cotorsion pairs  with
$\mathcal{U}\subseteq\mathcal{X}$ in $R$-Mod. Assume that
$\mathcal{U}$ and $\mathcal{X}$ are  both closed under pure
submodules and cokernels of pure monomorphisms. Then
$\widetilde{\mathcal{U}}_\mathcal{X}$ is closed under pure
subcomplexes and cokernels of pure monomorphisms, and so the
cotorsion pair $(\widetilde{\mathcal{U}}_\mathcal{X},
(\widetilde{\mathcal{U}}_\mathcal{X})^\bot)$ is cogenerated by a
set. Thus $(\widetilde{\mathcal{U}}_\mathcal{X},
(\widetilde{\mathcal{U}}_\mathcal{X})^\bot)$ is complete..
\end{exa}
\begin{proof} Suppose that the exact seqence $0\rightarrow V\rightarrow U\rightarrow
U/V\rightarrow 0$ is pure in $\text{Ch}(R)$ with
$U\in\widetilde{\mathcal{U}}_\mathcal{X}$. Then chain complexes $V$
and $U/V$ are exact as shown in proof of \cite[Lemma 2.7]{WL}, and
so for each $n\in\mathbb{Z}$,   $0\rightarrow V_n\rightarrow
U_n\rightarrow U_n/V_n\rightarrow 0$ and $0\rightarrow
Z_nV\rightarrow Z_nU\rightarrow Z_n(U/V)\rightarrow 0$ are pure
exact in $R$-Mod by \cite[Lemmas 2.6 and 3.7]{WL}. It is easily seen
that $V$ and $U/V$ are in $\widetilde{\mathcal{U}}_\mathcal{X}$.
This shows that $\widetilde{\mathcal{U}}_\mathcal{X}$ is closed
under pure subcomplexes and cokernels of pure monomorphisms. Now it
follows that the cotorsion pair
$(\widetilde{\mathcal{U}}_\mathcal{X},
(\widetilde{\mathcal{U}}_\mathcal{X})^\bot)$ is cogenerated by a set
since $\widetilde{\mathcal{U}}_\mathcal{X}$ is clearly 
deconstructible.
\end{proof}

\end{document}